% This is LLNCS.DEM the demonstration file of
% the LaTeX macro package from Springer-Verlag
% for Lecture Notes in Computer Science,
% version 2.3 for LaTeX2e
%
\documentclass[final]{siamltex}
\usepackage{makeidx}  % allows for indexgeneration
\usepackage{amsfonts}
\usepackage{amsmath}
\usepackage{graphicx}
\usepackage{latexsym}
\usepackage{amssymb}
\usepackage[lined,boxed]{algorithm2e}

\newtheorem{example}{Example}[section]
\newtheorem{problem}{Problem}

\title{A control strategy algorithm for finite alternating transition systems \thanks{This work received financial
support of the National Natural Science of China (No. 60973045), the NSF of Jiangsu Province
(No. BK2007191) and Fok Ying-Tung Education Foundation.}}
%
%\titlerunning{Hamiltonian Mechanics}  % abbreviated title (for running head)
%                                     also used for the TOC unless
%                                     \toctitle is used
%
\author{Jinjin Zhang\thanks{Department of Computer Science, Nanjing University of Aeronautics and Astronautics, Nanjing, P. R. China, 210016 ({\tt jinjinzhang@nuaa.edu.cn}).}
        \and Zhaohui Zhu\thanks{Corresponding author. Department of Computer Science, Nanjing University of Aeronautics and Astronautics, Nanjing, P. R. China, 210016; State Key Lab of Novel Software Technology,
Nanjing University,
Nanjing, P. R. China, 210093 ({\tt bnj4892856@jlonline.com}.)        }
        \and Jianfei Yang\thanks{Department of Automation Engineering,
Nanjing University of Aeronautics and Astronautics,
Nanjing, P. R. China, 210016 ({\tt
yjfsmile@nuaa.edu.cn})}}
%
%\authorrunning{Zhang and Zhu}   % abbreviated author list (for running head)
%
%%%% list of authors for the TOC (use if author list has to be modified)
%\tocauthor{Zhang, Zhu, Yang}
%

\begin{document}
%
%\frontmatter          % for the preliminaries
%
%
%\mainmatter              % start of the contributions
%
\maketitle              % typeset the title of the contribution

\begin{abstract}
Recently, there has been an increasing interest in the formal analysis and design of control systems.
In this area, in order to reduce the complexity and scale of control systems, finite abstractions of control systems are introduced and explored.
Amongst, Pola and Tabuada construct finite alternating transition systems as approximate finite abstractions for control systems with disturbance inputs [\textit{SIAM Journal on Control and Optimization}, Vol. 48, 2009, 719-733].
Given linear temporal logical formulas as specifications, this paper provides a control strategy algorithm to find control strategies of Pola and Tabuada's abstractions enforcing specifications.
\end{abstract}
\begin{keywords}
  alternating transition systems, finite abstraction, linear temporal logic, control strategy algorithm
\end{keywords}
\begin{AMS}
93A30, 03B44, 68Q85, 68T20
\end{AMS}

\pagestyle{myheadings}
\thispagestyle{plain}
\markboth{JINJIN ZHANG, ZHAOHUI ZHU, AND JIANFEI YANG}{CONTROL STRATEGY ALGORITHM FOR ALTERNATING SYSTEMS}
\section{Introduction}\label{Sec:introduction}
The formal analysis and design of control systems is one of recent trends in control theory.
The formal analysis is concerned with verifying whether a control system satisfies a desired specification, while the purpose of the formal design is to construct a controller for control system so that it meets a given specification.
Traditionally, stability and reachability are considered as specifications in the control-theoretic community~\cite{hab:1,hab:2}. Recently, there has been an increasing interest in extending the formal analysis and design by considering more complex specifications \cite{alur,anton:1,fain:1,fain:2,kloe:1,Koutsoukos,lacer:1,tab:1,tab:2}.
In these work, temporal logic \cite{alur,anton:1,fain:1,fain:2,kloe:1,tab:1}, regular expressions \cite{Koutsoukos}, and transition systems \cite{tab:2} are used to describe specifications. Amongst, temporal logic, due to its resemblance to natural language and the existence of algorithms for model checking, is widely adopted for task specification and controller synthesis in control theory. For example, linear temporal logic (LTL) has been adopted to describe the desired properties of discrete-time linear systems~\cite{tab:1} and continuous-time linear systems \cite{kloe:1}. In addition, Computation Tree Logic (CTL)\cite{anton:1} and LTL\cite{fain:1,fain:2} are applied to express specifications in the area of mobile robotics.

The formal analysis and design of large-scale control systems is difficult because of the complexity and scale of systems.
In order to reduce the complexity and scale, finite abstractions are extracted from  these control systems~\cite{alur,tab:1,tab:2}.
Usually, finite abstractions and original systems share properties of interest and the analysis and design of finite abstractions is simpler than that of original control systems. Thus the analysis and design of control systems is often equivalently performed on the corresponding  finite abstractions.
So finite abstractions are extremely useful in the formal analysis and design.

Much work has been devoted to the construction of finite abstractions of control systems. For instance, Tabuada and Pappas identify critical properties of discrete-time linear systems ensuring the existence of finite abstractions \cite{tab:3}. Symbolic models of nonlinear control systems are constructed in \cite{pola:4,tab:4}.
Finite abstractions of hybrid systems are studied in~\cite{alur:2,alur:3,henz,henz:1,laffer}. An excellent review of these work may be found in~\cite{alur}.

In the work mentioned above, researchers consider control systems without reference to disturbances.
However, as pointed out by B C. Kuo in \cite{Kuo}, all physical systems are subject to some types of extraneous disturbances or noise during operation.
Recently, Pola and Tabuada extend the above work to control systems affected by disturbances~\cite{pola:5,pola:2}.
A mathematical structure called alternating transition system is presented as symbolic abstraction  of control system with disturbance inputs~\cite{pola:5,pola:2}.
Under the assumption that control systems are bounded, such abstractions are finite.

In~\cite{fain:2}\cite{tab:1}\cite{tab:2}, usual transition systems are adopted as finite abstractions of control systems.
Some approaches are presented to construct control strategies of these finite abstractions enforcing specifications.
Further, based on such control strategies, controllers of original control systems are generated to meet specifications.
So  the construction of control strategies of finite abstractions is one of the important steps in the formal design of control systems.
However, since Pola and Tabuada's abstractions~\cite{pola:5,pola:2} are modeled by alternating transition systems rather than usual transition systems, the approaches provided in~\cite{fain:2}\cite{tab:1}\cite{tab:2} are not suitable for establishing control strategies for Pola and Tabuada's abstractions.
To overcome this defect, this paper will present a control strategy algorithm based on Kabanza et al.'s planning algorithm~\cite{kab:1} to solve the following control problem:
given a finite, non-blocking alternating transition system $T$ and a specification, how to find an initial state and a control strategy of $T$ enforcing the given specification?
Clearly, this algorithm can be used to find control strategies for Pola and Tabuada's finite abstractions.

The rest of this paper is organized as follows. In Section 2, we recall the notion of alternating transition system and present the control problem mentioned above in detail.
Section~3 recalls some notions and results about Kabanza et al.'s planning algorithm.
 Based on their algorithm, Section~4 provides a control strategy algorithm.
In Section~5, we explore the correctness and completeness of this algorithm.
Finally, we conclude the paper with future work in Section~6.
The appendix includes the proofs of some results of this paper.
\section{Alternating transition system and control problem}\label{Sec:pre}
Before recalling the notion of alternating transition system, we introduce some useful notations. The symbol $\mathbb{N}$ denotes the set of positive integers.
For any set $A$, $A^{+}$ denotes the set of all non-empty finite strings over $A$, and $A^{\omega}$ represents the set of infinite strings over $A$. Usually, we put $A^{\infty}=A^{+}\cup A^{\omega}$. We use $s_{A}$, $\sigma_{A}$ and $\alpha_{A}$ to denote the elements of $A^{+}$, $A^{\omega}$ and $A^{\infty}$, respectively. If $A$ is known from the context, we will omit the subscript in $s_{A}$, $\sigma_{A}$ and $\alpha_{A}$.
For any $s\in A^{+}$, $s[i]$ and $s[end]$ mean the $i$-th element and the last element of $s$, respectively.
Given $i\leq j$, $s[i,j]$, $s[i,end]$ and $\sigma[i,\infty]$ represent $s[i]s[i+1]\cdots s[j]$, $s[i]s[i+1]\cdots s[end]$ and $\sigma[i]\sigma[i+1]\cdots$, respectively. As usual, $|s|$ means the length of $s$. For any $\sigma\in A^{\omega}$, $|\sigma|$ is set to be $\infty$.

Pola and Tabuada provide finite abstractions for control systems with disturbance inputs. For these control systems, the inputs consist of control and disturbance inputs, where the former are controllable and the latter are not.
Usual transition system can not capture the different roles played by these two kinds of inputs.
To overcome this obstacle,
Pola and Tabuada adopt alternating transition systems as models of these control systems and their abstract systems~\cite{pola:5,pola:2}.
\begin{definition}\label{def:system}
An alternating transition system is a tuple:
\begin{center}
$T=(Q,A,B,\longrightarrow,O,H)$,
\end{center}
consisting of

$\bullet$ a set of states $Q$;

$\bullet$ a set of control labels $A$;

$\bullet$ a set of disturbance labels $B$;

$\bullet$ a transition relation $\rightarrow\subseteq Q\times A\times B\times Q$;

$\bullet$ an observation set $O$;

$\bullet$ an observation function $H:Q\rightarrow O$.

An alternating transition system is said to be

$\bullet$ finite if $Q$, $A$ and $B$ are finite;

$\bullet$ non-blocking if $\{q':q\xrightarrow{a,b}q'\}\not=\emptyset$ for any $q\in Q$, $a\in A$ and $b\in B$.

An infinite sequence $\sigma\in Q^{\omega}$ is said to be a trajectory of $T$ if and only if for all $i\in\mathbb{N}$, $\sigma[i]\xrightarrow{a_{i},b_{i}} \sigma[i+1]$ for some $a_{i}\in A $ and $b_{i}\in B$.
\end{definition}

 In the above definition, a transition label is a pair $<a,b>$, where the former is used to denote control input and the latter represents disturbance input.
Pola and Tabuada construct non-blocking alternating transition systems as abstractions of control systems with disturbance inputs~\cite{pola:5,pola:2}.
Under the assumption that control systems are bounded, their abstractions are finite.
The related notions and results can be found in~\cite{pola:5,pola:2}.

This paper aims to provide an approach to obtain control strategies of Pola and Tabuada's finite abstractions to meet specifications.
Formally, we will solve the following control problem:
\begin{problem}\label{problem1}
Given a finite, non-blocking alternating transition system $T$ and a specification, how to find an initial state and a control strategy of $T$ enforcing the given specification?
\end{problem}

In this paper, the specifications mentioned above will be described by the linear temporal logic LTL$_{-X}$~\cite{emerson}.
The LTL$_{-X}$ formulae have been used to specify the desired properties of control system and its abstraction in~\cite{kloe:1}. We recall this logic below.

\begin{definition}\cite{emerson,kloe:1}\label{def:ltl}
Let $\mathbb{P}$ be a finite set of atomic propositions. The linear temporal logic LTL$_{-X}(\mathbb{P})$ formula over $\mathbb{P}$ is inductively defined as:
\begin{center}
$\varphi::=p|\neg \varphi|\varphi_{1}\wedge\varphi_{2}|\varphi_{1}\mathbf{U}\varphi_{2}$
\end{center}
where $p\in \mathbb{P}$.
\end{definition}

The operator $\mathbf{U}$ is read as ``until'' and the formula $\varphi_{1}\mathbf{U}\varphi_{2}$ specifies that $\varphi_{1}$ must hold until $\varphi_{2}$ holds.
The semantics of LTL$_{-X}(\mathbb{P})$ formulae are defined below.

\begin{definition}\label{Def:satisfaction}
Let $\sigma_{\mathbb{P}}$ be any infinite word over $2^{\mathbb{P}}$ (i.e.,$\sigma_{\mathbb{P}}\in (2^{\mathbb{P}})^{\omega}$).
 The satisfaction of LTL$_{-X}(\mathbb{P})$ formula $\varphi$ at position $i\in \mathbb{N}$ of the word $\sigma_{\mathbb{P}}$, denoted by $\sigma_{\mathbb{P}}[i]\models\varphi$, is defined inductively as follows:

(1) $\sigma_{\mathbb{P}}[i]\models p$ iff $p\in \sigma_{\mathbb{P}}[i]$;

(2) $\sigma_{\mathbb{P}}[i]\models \neg \varphi$ iff $\sigma_{\mathbb{P}}[i]\models \varphi$ does not hold;

(3) $\sigma_{\mathbb{P}}[i]\models\varphi_{1}\wedge\varphi_{2}$ iff $\sigma_{\mathbb{P}}[i]\models\varphi_{1}$ and $\sigma_{\mathbb{P}}[i]\models\varphi_{2}$;

(4) $\sigma_{\mathbb{P}}[i]\models\varphi_{1}\mathbf{U}\varphi_{2}$ iff there exists $j\geq i$ such that $\sigma_{\mathbb{P}}[j]\models\varphi_{2}$ and for all $k\in\mathbb{N}$ with $i\leq k<j$, we have $\sigma_{\mathbb{P}}[k]\models\varphi_{1}$.

A word $\sigma_{\mathbb{P}}$ satisfies an LTL$_{-X}(\mathbb{P})$ formula $\varphi$, written as $\sigma_{\mathbb{P}}\models\varphi$, if and only if $\sigma_{\mathbb{P}}[1]\models \varphi$.
\end{definition}
\begin{definition}\label{def:traj satis}
Let $T=(Q,A, B,\longrightarrow,O, H)$ be a finite, non-blocking alternating transition system, $\mathbb{P}$ a finite set of atomic propositions and let $\prod:Q\rightarrow 2^{\mathbb{P}}$ be a valuation function. For any  LTL$_{-X}(\mathbb{P})$ formula $\phi$, an infinite sequence $\sigma\in Q^{\omega}$ is said to satisfy $\phi$ w.r.t $\prod$, written as $\sigma\models_{\prod}\phi$, if and only if $\prod(\sigma)\models\phi$, where $\prod(\sigma)\triangleq\prod(\sigma[1])\prod(\sigma[2])\cdots$.
\end{definition}

If the valuation function $\prod$ is known from the context, we often omit the subscript in $\models_{\prod}$.
\section{Kabanza et al.'s algorithm}\label{Sec:strategy}
To solve Problem~\ref{alg}, we will provide a control strategy algorithm based on Kabanza et al.'s planning algorithm.
This section recalls some notions and results about Kabanza et al.'s algorithm.
More details can be found in~\cite{kab:1}.

Kabanza et al.~develop their work in the framework of reactive agent.
Given a finite set $Q$ of world states, a \textit{reactive agent} is described as a pair $(q_{0},succ)$, where $q_{0}\in Q$ is an initial world state and $succ$ is a transition function. For any world state $q\in Q$, $succ(q)$ returns a list $((a_{1},d_{1},W_{1}),\cdots, (a_{n},d_{n},W_{n}))$, where $a_{i}$ is an action that is executable in $q$, $d_{i}$ is a strictly positive real number denoting the duration of $a_{i}$ in $q$, and $W_{i}\subseteq Q$ is the set of nondeterministic successors resulting from the execution of $a_{i}$ in $q$.
As usual, if $q'\in W_{i}$ for some $i\leq n$, then we denote by $q\xrightarrow{a_{i}} q'$ that $q'$ is a successor of $q$ resulting from the execution of  $a_{i}$ in $q$.

\begin{figure}[h]
\begin{center}
\centerline{\includegraphics[scale=0.4]{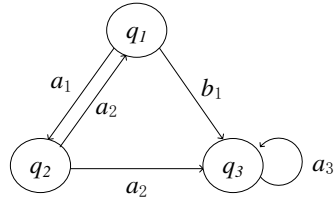}}
\end{center}
\caption{Reactive Agent}\label{Example: reactive agent}
\end{figure}

\begin{example}\label{example:agent}
Fig~\ref{Example: reactive agent} illustrates the reactive agent ($q_{1},succ$) , where $succ(q_{1})=((a_{1}, 1, \{q_{2}\}), (b_{1},1, \{q_{3}\}))$, $succ(q_{2})=((a_{2}, 1, \{q_{1},q_{3}\}))$, and $succ(q_{3})=((a_{3}, 1, \{q_{3}\}))$.
Since the durations of all actions are $1$, we do not indicate them in this figure.
\end{example}

\begin{definition}\label{Def:reactive plan}\cite{kab:1}
A reactive plan is represented by a set of situation control rules (SCRs), where an SCR is a tuple of the form $(n,q,a,N)$ such that:

$\bullet$ $n$ is a number denoting a plan state;

$\bullet$ $q$ is the world state labeling the plan state $n$ and describing the situation when this SCR is applied;

$\bullet$ $a$ is the action to be executed in plan state $n$; and

$\bullet$ $N$ is a set of integers denoting plan states that are nondeterministic successors of $n$ when $a$ is executed \footnote{For any $q'$ with $q\xrightarrow{a}q'$, there must be $j\in N$ such that the corresponding world state of plan state $j$ is $q'$.}.
\end{definition}

In the above definition, two kinds of states are referred to: world states and plan states.
Each plan state is labeled by a world state and different plan states may be labeled by the same world state.
Roughly speaking, these plan states labeled by the same world state $q$ may denote different executive pathes along which the world state $q$ is reached.
So, since the actions to be executed in different plan states may not be identical, the choice of the actions in the world state $q$ can be history dependent.
That is, when $q$ is reached along different pathes, the actions to be executed in $q$ may be different.
Before providing an example to illustrate the above argument, we describe the execution of a reactive plan as follows.

We start the execution of a reactive plan by fetching the SCR corresponding to the initial world state. By convention, this is always the SCR with plan state 1. The corresponding world state describes the current situation before the agent executes any action.
At any time, given the current SCR $(n,q,a,N)$, the action $a$ is executed and the SCR matching the resulting situation is determined from the successor plan states in $N$ by getting an SCR $(n',q',a',N')$ such that $n'\in N$.
In this case, the current situation is $q'$ and then $a'$ is executed.

\begin{figure}[h]
\begin{center}
\centerline{\includegraphics[scale=0.4]{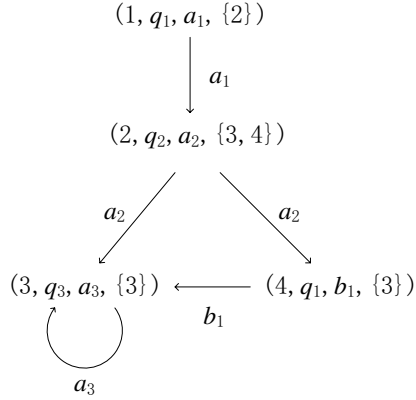}}
\end{center}
\caption{Executing Reactive Plan}\label{Example: reactive plan}
\end{figure}
\begin{example}\label{example:plan}
Consider the reactive agent provided in Example~\ref{example:agent}.
Given a reactive plan
\begin{center}
$RP=\{(1,q_{1},a_{1},\{2\}), (2,q_{2},a_{2},\{3,4\}),(3,q_{3},a_{3},\{3\}),(4,q_{1},b_{1},\{3\})\}$,
 \end{center}
 its execution is illustrated by Fig~\ref{Example: reactive plan}.

In this reactive plan,  both plan states 1 and 4 are labeled by world state $q_1$.
Plan state 1 represents that $q_1$ is the initial state, while plan state 4 means that $q_1$ is reached from $q_2$ by executing $a_2$.
Then it is easy to see that the actions to be executed in $q_1$ may be different  when the pathes along which $q_{1}$ is reached is different.
\end{example}

The trajectory generated by reactive plan is defined as follows.

\begin{definition}\label{def:traj of plan}\cite{kab:1}
Let $(q_{1},succ)$ be a reactive agent and let $RP=\{((1,q_{1},a_{1},N_{1}),\\ (2,q_{2}, a_{2},N_{2}),\cdots (k,q_{k},a_{k},N_{k}))\}$ be a reactive plan of $(q_{1},succ)$.
An infinite sequence $\sigma$ of world states is said to be a trajectory generated by the reactive plan $RP$ if and only if there exists an infinite sequence $\sigma_{N}=i_{1}i_{2}\cdots \in \{1,2,\cdots, k\}^{\omega}$ such that $\sigma_{N}[1]=1$ and for all $j\in\mathbb{N}$, $i_{j+1}\in N_{i_{j}}$ and $q_{i_{j}}=\sigma[j]$.
\end{definition}

\begin{example}\label{example: trajectory}
Consider the reactive agent and the reactive plan $RP$ in Example~\ref{example:agent} and~\ref{example:plan}, respectively.
Let $\sigma_{1}=q_{1}q_{2}q_{3}^{\omega}$ and $\sigma_{2}=q_{1}q_{2}q_{1}q_{3}^{\omega}$.
It is easy to check that $\sigma_{1}$ and $\sigma_{2}$ are exactly trajectories generated by this reactive plan.
\end{example}

\begin{definition}\label{def:satis plan}
Let $\mathbb{P}$ be a finite  set of atomic propositions and let  $\prod$ be a valuation function that assigns each world state $q$ a set $\prod(q)\subseteq\mathbb{P}$. For any LTL$_{-X}(\mathbb{P})$ formula $\phi$, a reactive plan is said to satisfy $\phi$ w.r.t.~$\prod$ if and only if all trajectories generated by this reactive plan satisfy $\phi$ w.r.t.~$\prod$ \footnote{Similar to Definition~\ref{def:traj satis}, we may define the satisfaction relation between LTL$_{-X}(\mathbb{P})$ formulas and trajectories generated by the reactive plan w.r.t.~$\prod$.} and there exists  at least one trajectory generated by this reactive plan.
\end{definition}

\begin{example}\label{Example:satis plan}
Consider the reactive agent and the reactive plan $RP$ in Example~\ref{example:agent} and~\ref{example:plan}, respectively.
Let $\mathbb{P}=\{p_1,p_2,p_3\}$ and let  $\prod:\{q_1,q_2,q_3\}\rightarrow 2^{\mathbb{P}}$ be a valuation function defined as: $\prod(q_1)=\{p_1,p_2\}$, $\prod(q_2)=\{p_2,p_3\}$ and $\prod(q_3)=\{p_1,p_3\}$.
It is easy to check that the reactive plan $RP$ satisfies $p_2\mathbf{U}p_3$ w.r.t. $\prod$.
\end{example}

In \cite{kab:1}, Kabanza et al.~use Metric Temporal Logic (MTL) to specify the desired behaviors of reactive agent.
Given a finite set $\mathbb{P}$ of atomic propositions,
MTL($\mathbb{P}$) formulae are defined as:
\begin{equation*}
\varphi::=p|\neg \varphi |\varphi_{1}\wedge\varphi_{2} | X_{\sim t}\varphi | \Box_{\sim t}\varphi |\varphi_{1}\mathbf{U}_{\sim t}\varphi_{2}
\end{equation*}
where $p\in \mathbb{P}$ is atomic proposition, $X_{\sim t}$, $\Box_{\sim t}$ and $\mathbf{U}_{\sim t}$ are called the \textit{next}, \textit{always} and \textit{until} operators, respectively, $\sim$ denotes either $\leq$, $<$, $\geq$ or $>$, and $t$ is a non-negative real.
Intuitively, if a time constraint "$\sim t$" is associated to a modal operator, then the modal formula connected by this modal operator must hold within a time period satisfying the relation ``$\sim t$".
For example, $\varphi_{1}\mathbf{U}_{\geq t}\varphi_{2}$ means that $\varphi_{1}$ holds until $\varphi_{2}$ becomes true on the semi-open time interval $[t,\infty)$.
So it is easy to see that $\mathbf{U}_{\geq 0}$ coincides with the usual $until$ operator $\mathbf{U}$.
Thus linear temporal logic LTL$_{-X}$($\mathbb{P}$) can be viewed as a sublanguage of MTL($\mathbb{P}$).

Kabanza et al.~also define the semantics of MTL$(\mathbb{P})$.
A careful examination shows that, when we only consider LTL$_{-X}$$(\mathbb{P})$ formulas, Kabanza et al.'s definition is coincided with Definition~\ref{def:satis plan}.
Since the remainder of this paper will mostly refer to  LTL$_{-X}(\mathbb{P})$ formulas, we do not recall the formal definition of the semantics of MTL($\mathbb{P})$. The interested reader may find it in Section 5.2 in \cite{kab:1}.

Kabanza et al.~provide an planning algorithm to construct a reactive plan satisfying an MTL($\mathbb{P}$) formula $\phi$ for the given reactive agent and valuation function $\prod$. The detailed algorithm may be found in \cite{kab:1}.
The following result comes from Theorem 16 and the observation in Section 7.5 in \cite{kab:1}.

\begin{theorem}\label{Th:kab's algorithm o} \cite{kab:1}
Kabanza et al.~planning algorithm is correct and complete. In other words, given a reactive agent $(q_{0},succ)$, an MTL($\mathbb{P}$) formula $\phi$ and a valuation function $\prod$, if Kabanza et al.'s algorithm returns a reactive plan then this reactive plan satisfies $\phi$.
Moreover, Kabanza et al.'s algorithm can find a reactive plan satisfying $\phi$ if such plan exists.
\end{theorem}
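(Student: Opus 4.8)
The plan is to treat this as the correctness proof of a game-solving search, since the reactive agent is nondeterministic: at each world state $q$ the agent chooses an executable action $a$ (an OR-choice), while the environment resolves the nondeterminism by selecting a successor from the associated set $W$ (an AND-choice). A reactive plan is therefore a winning strategy for the agent, and the two assertions to establish are \emph{soundness} (any plan the algorithm returns is winning, i.e.~all its trajectories satisfy $\phi$, and at least one trajectory exists) and \emph{completeness} (whenever a winning strategy exists the algorithm returns one). Throughout I would regard the detailed pseudocode of Kabanza et al.~as given and reason only about the invariants it maintains on the configurations it explores.

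The core tool is \emph{formula progression} for MTL. For a world state $q$ and an elapsed duration $\delta>0$ I would define a residual formula $\mathrm{prog}(\phi,q,\delta)$ by recursion on $\phi$: the propositional cases evaluate $p$ against $\prod(q)$ and push progression through $\neg$ and $\wedge$, while each metric operator $X_{\sim t}$, $\Box_{\sim t}$, $\mathbf{U}_{\sim t}$ is unfolded one step, its time bound is decremented by $\delta$, and the resulting constraint is simplified (a bound that has become trivially satisfied or violated collapses the operator to $\mathit{true}$, $\mathit{false}$, or an untimed form). The decisive lemma is the semantic soundness of progression: for every trajectory $\sigma$ generated by the agent, $\sigma[i]\models\phi$ holds iff the suffix $\sigma[i{+}1,\infty]$ satisfies $\mathrm{prog}(\phi,\sigma[i],d_i)$ at its first position, where $d_i$ is the duration of the action taken at step $i$. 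Given this lemma, soundness follows by induction along any trajectory of a returned plan: the algorithm retains only plan states whose residual formula is not equivalent to $\mathit{false}$ and which possess a successor, so no trajectory of the plan can reach a contradiction and at least one infinite trajectory exists; the progression lemma then propagates satisfaction back to the initial formula $\phi$.

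Completeness is where the metric setting forces the real work. I would model the search as an AND--OR graph whose nodes are configurations $(q,\psi)$ consisting of a world state together with a residual MTL formula, with OR-edges for action choices and AND-edges for environment successors; the algorithm explores this graph forward with loop detection and, on failure, backtracks to alternative actions. To conclude that the exploration is exhaustive one must show the reachable configuration space is \emph{finite}. This is immediate for the LTL$_{-X}$ fragment, where progression stays within the finite closure of subformulas; but for full MTL the decrementing of time bounds by arbitrary positive durations threatens to generate infinitely many residual formulas. The key observation (the content of Section~7.5 of \cite{kab:1}) is that only bounds not exceeding the largest constant occurring in $\phi$ are ever relevant, that each action consumes a duration bounded away from $0$, and that once a bound is exhausted the corresponding operator simplifies; hence only finitely many distinct residual formulas arise and their closure is finite. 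Finiteness turns the AND--OR search into a terminating fixed-point computation that visits every configuration reachable from $(q_0,\phi)$, so if a winning strategy exists it is found.

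The main obstacle is precisely this finiteness and termination argument for the metric operators: without the boundedness of the relevant constants and a uniform positive lower bound on action durations, the residual-formula space need not be finite and completeness would fail. Once finiteness is secured, both directions reduce to the progression lemma together with the standard correctness of AND--OR game search, which is the combination recorded as Theorem~16 in \cite{kab:1}; the statement above is then obtained by specializing that general result and reading off the guarantees for the returned plan.
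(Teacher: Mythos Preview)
The paper does not prove this theorem at all: it is stated as a citation from \cite{kab:1}, with the remark ``The following result comes from Theorem 16 and the observation in Section 7.5 in \cite{kab:1}'' immediately preceding it, and no proof is given. In this paper the result functions purely as a black box imported from Kabanza et al.'s work.

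Your proposal therefore goes well beyond what the paper does. You attempt to reconstruct the internal argument of \cite{kab:1} --- formula progression for MTL, the AND--OR game view of nondeterministic planning, and the finiteness/termination argument for the metric operators --- whereas the present paper simply takes the correctness and completeness of Kabanza et al.'s algorithm as established externally and builds on it (the only in-paper step is the trivial Corollary that specializes from MTL to LTL$_{-X}$). Your sketch is a plausible outline of how such a proof would run, and you correctly identify that the delicate point is finiteness of the residual-formula space in the metric case; but whether the details match \cite{kab:1} exactly cannot be judged from the present paper, and in any event none of this argument is needed here. For the purposes of this paper, the ``proof'' is simply the citation.
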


Immediately, we have the following corollary, which is trivial but useful.
\begin{corollary}\label{Th:kab's algorithm}
Given a reactive agent $(q_{0},succ)$, an LTL$_{-X}$($\mathbb{P}$) formula $\phi$ and a valuation function $\prod$, if Kabanza et al.'s algorithm returns a reactive plan then this reactive plan satisfies $\phi$.
Moreover, Kabanza et al.'s algorithm can find a reactive plan satisfying $\phi$ if such plan exists.
\end{corollary}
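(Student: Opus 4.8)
The plan is to derive this corollary from Theorem~\ref{Th:kab's algorithm o} by viewing LTL$_{-X}(\mathbb{P})$ as a fragment of MTL($\mathbb{P}$), as already foreshadowed in the discussion preceding the statement. First I would make the embedding explicit: define a translation $\phi\mapsto\phi^{*}$ that is the identity on atomic propositions, commutes with $\neg$ and $\wedge$, and sends $\varphi_{1}\mathbf{U}\varphi_{2}$ to $\varphi_{1}^{*}\mathbf{U}_{\geq 0}\varphi_{2}^{*}$. By construction $\phi^{*}$ is a well-formed MTL($\mathbb{P}$) formula, and since $\mathbf{U}_{\geq 0}$ is the unconstrained until operator, $\phi^{*}$ is the MTL formula expressing the same requirement as $\phi$.

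The heart of the argument is a semantic coincidence lemma: for every reactive agent $(q_{0},succ)$, every valuation $\prod$, and every reactive plan $RP$, the plan $RP$ satisfies $\phi$ with respect to $\prod$ in the sense of Definition~\ref{def:satis plan} if and only if $RP$ satisfies $\phi^{*}$ with respect to $\prod$ in the sense of Kabanza et al.'s MTL semantics. I would prove this by first establishing the statement at the level of a single trajectory by induction on the structure of $\phi$, comparing Definition~\ref{Def:satisfaction} with the MTL clauses; the only non-trivial case is the until operator, where one must check that the $\geq 0$ time constraint of $\mathbf{U}_{\geq 0}$ is vacuous, so that Kabanza et al.'s real-time ``eventually $\varphi_{2}^{*}$ while $\varphi_{1}^{*}$ holds meanwhile'' collapses to the positional condition of clause~(4). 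The trajectory-level equivalence then lifts to plans, because both Definition~\ref{def:satis plan} and Kabanza et al.'s notion declare a plan to satisfy a formula exactly when every generated trajectory satisfies it and at least one generated trajectory exists.

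With the lemma in hand, the two conclusions are immediate. For soundness, if the algorithm returns a reactive plan $RP$ on input $\phi^{*}$, then $RP$ satisfies $\phi^{*}$ by Theorem~\ref{Th:kab's algorithm o}, hence satisfies $\phi$ by the lemma. For completeness, if some reactive plan satisfies $\phi$, then by the lemma the same plan satisfies $\phi^{*}$; Theorem~\ref{Th:kab's algorithm o} then guarantees that the algorithm finds a plan satisfying $\phi^{*}$, which by the lemma satisfies $\phi$. I expect the only point requiring genuine care to be the until case of the inductive lemma---precisely the ``careful examination'' mentioned in the text---namely verifying that the real-valued durations $d_{i}$ and the timing structure of MTL play no role once the time bound is the trivial $\geq 0$, so that the timed semantics degenerate to the discrete, position-based semantics of LTL$_{-X}(\mathbb{P})$.
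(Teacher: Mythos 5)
Your proposal is correct and takes essentially the same route as the paper: the paper's one-line proof simply invokes Theorem~\ref{Th:kab's algorithm o} together with the observation, made in the text immediately preceding the corollary, that LTL$_{-X}(\mathbb{P})$ is a sublanguage of MTL($\mathbb{P}$) via $\mathbf{U}=\mathbf{U}_{\geq 0}$ and that Kabanza et al.'s semantics coincides with Definition~\ref{def:satis plan} on this fragment. Your explicit translation $\phi\mapsto\phi^{*}$ and the inductive coincidence lemma merely spell out the ``careful examination'' that the paper leaves to the reader.
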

\begin{proof}
Follows from Theorem~\ref{Th:kab's algorithm o} and the fact that linear temporal logic LTL$_{-X}$($\mathbb{P}$) can be viewed as a sublanguage of MTL($\mathbb{P}$). \qquad
\end{proof}
\section{Control strategy algorithm based on Kabanza et al.'s algorithm}
The previous section has provided a brief overview about Kabanza et al.'s planning algorithm.
This section will present a control strategy algorithm based on Kabanza et al.'s algorithm.
Before providing this algorithm, we introduce the notion of control strategy.

\begin{definition}\label{def:control strategy}
Let $T=(Q,A,B,\longrightarrow,O,H)$ be a finite, non-blocking alternating transition system.
For any function $f:Q^{+}\rightarrow A$, we say $f$ is a control strategy of $T$.
For any $q\in Q$ and $f:Q^{+}\rightarrow A$, the outcomes $Out^{n}_{T}(q,f)$ $(n\in\mathbb{N})$ and $Out_{T}(q,f)$  of $f$ from $q$ are defined as follows:
\begin{equation*}
\begin{aligned}Out^{n}_{T}(q,f)=\{s\in Q^{n}: & s[1]=q\ \mathrm{and}\ \forall 1\leq i<n  \exists b_{i}\in B (s[i]\xrightarrow{f(s[1,i]),b_{i}} s[i+1])\},
 \\
 Out_{T}(q,f)=\{\sigma\in Q^{\omega}: & \sigma[1]=q\ \mathrm{and}\ \forall i\in\mathbb{N} \exists b_{i}\in B (\sigma [i]\xrightarrow{ f(\sigma [1,i]),b_{i}} \sigma[i+1])\}.
\end{aligned}
\end{equation*}
Furthermore, we define $Out^{+}_{T}(q,f)$ and $Out^{\infty}_{T}(q,f)$ as:
$Out^{+}_{T}(q,f)=\bigcup_{n\in\mathbb{N}}Out^{n}_{T}(q,f)$ and
$Out^{\infty}_{T}(q,f)=Out^{+}_{T}(q,f)\cup Out_{T}(q,f)$.
\end{definition}

If alternating transition system $T$ is known from the context, we often omit the subscripts in $Out^{n}_{T}(q,f)$, $Out_{T}(q,f)$, $Out^{+}_{T}(q,f)$, and $Out^{\infty}_{T}(q,f)$.

Given a finite, non-blocking alternating transition system $T$, an LTL$_{-X}(\mathbb{P})$ formula~$\phi$ and a valuation function $\prod$, we want to find an initial state $q$ and a control strategy $f$ of $T$ so that $\sigma\models\phi$ for all $\sigma\in  Out(q,f)$.
An algorithm, which is used to find such initial state and control strategy, is presented in Algorithm~\ref{alg} below.

\begin{algorithm}\label{alg}
\caption{Control strategy algorithm}
\SetKwInOut{Input}{input}

(1) \Input{$T$, $\phi$ and $\prod$, where $T=(Q,A,B,\longrightarrow,O,H)$}

(2) Construct a transition function $succ_T$ from $T$

(3) \textbf{for all} $q_{0}\in Q$ \textbf{do}

(4) \ \ \  Adopt Kabanza et al.'s algorithm to find a reactive plan $RP_{kab}$ of $(q_{0}, succ_T)$ enforcing $\phi$ w.r.t. $\prod$

(5) \ \ \  \textbf{if} reactive plan $RP_{kab}$ is found  \textbf{then}

(6) \ \ \ \ \ \ \  $RP$=SimplyReactivePlan($RP_{kab}$)\ \ \  \ \ \ \ /*See Algorithm~\ref{alg:simple} */

(7) \ \ \ \ \ \ \  $f_{RP}$=FunctionStrategy($RP$)\ \ \ \ \  \ \ \ \ \ \ \ \ /*See Algorithm~\ref{alg:strategy} */

(8) \ \ \ \ \ \  Return $q_{0}$ and $f_{RP}$

(9) \ \ \  \textbf{end if}

(10) \textbf{end for}

(11) Return false
\end{algorithm}

In Algorithm~\ref{alg}, steps (2), (6) and (7) are needed to be further refined.
We illustrate them in turn.

\begin{definition}\label{def:transition function}
Let $T=(Q,A,B,\longrightarrow,O,H)$ be a finite, non-blocking alternating transition system and $A=\{a_{1},a_{2},\cdots,a_{k}\}$.
The transition function $succ_{T}$ w.r.t $T$ is defined as: for any $q\in Q$, we set
$succ_{T}(q)=((a_{1},1,W_{1}),(a_{2},1,W_{2}),\cdots,(a_{k},1,W_{k}))$,
 where $W_{i}\triangleq\{q'\in Q:q\xrightarrow{a_{i},b} q'\ \mathrm{for}\ \mathrm{some}\ b\in B\}$ for $i=1,2,\cdots k$.
 \end{definition}

By Definition~\ref{def:system}, for any finite, non-blocking alternating transition system $T=\\ (Q,A,B,\longrightarrow,O,H)$, each set $W_{i}$ mentioned above is finite and non-empty.
Thus for any $q\in Q$, $(q,succ_{T})$ is a reactive agent.
Clearly, due to the finiteness of $Q$, $A$, $B$ and $\rightarrow$, the function $succ_T$ may be obtained using a simple algorithm. We leave it to interested reader.
Before refining steps (6) and (7), we provide some notions and result below.

\begin{definition}
Let $T=(Q,A,B,\longrightarrow,O,H)$ be a finite, non-blocking alternating transition system, $q\in Q$ and let $succ_{T}$ be the transition function w.r.t $T$.
Then any reactive plan of $(q, succ_{T})$ is said to be a reactive plan of $T$.
\end{definition}

\begin{definition}\label{def:cycle of plan}
Let $RP=\{(1,q_{1},a_{1},N_{1}), (2,q_{2},a_{2},N_{2}),\cdots, (k,q_{k},a_{k},N_{k})\}$ be a reactive plan. For any finite sequence $s\in\{1,2,\cdots,k\}^{+}$, if $|s|>1$ and $s[i+1]\in N_{s[i]}$ for all $i<|s|$, then $s$ is said to be a finite path of $RP$.
For any two pathes $s_1$ and $s_2$ of $RP$, if $s_1[1]=1$ and $s_1[end]=s_2[1]=s_2[end]$, then the pair $(s_1, s_2)$ is said to be a reachable cycle of $RP$.
\end{definition}

The following result offers a sufficient and  necessary condition for the existence of trajectory generated by reactive plan.
\begin{lemma}\label{Lem:reactive cycle}
Let $RP=\{(1,q_{1},a_{1},N_{1}), (2,q_{2},a_{2},N_{2}),\cdots, (k,q_{k},a_{k},N_{k})\}$ be a reactive plan. There exists a trajectory generated by $RP$ if and only if there exists a reachable cycle $(s_1,s_2)$ of $RP$.
\end{lemma}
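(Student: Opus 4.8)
The plan is to reformulate everything in terms of the directed graph $G$ on the plan states $\{1,2,\dots,k\}$ that has an edge $i\to i'$ exactly when $i'\in N_i$. By Definition~\ref{def:traj of plan}, a trajectory generated by $RP$ comes precisely from an infinite walk $\sigma_N=i_1i_2\cdots$ in $G$ with $i_1=1$ (reading off $\sigma[j]=q_{i_j}$); and by Definition~\ref{def:cycle of plan}, a reachable cycle $(s_1,s_2)$ is a finite walk $s_1$ from $1$ to some state $m:=s_1[end]$ together with a nontrivial closed walk $s_2$ at $m$ (i.e. $s_2[1]=s_2[end]=m$ and $|s_2|>1$). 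Under this dictionary the statement becomes the standard fact that $G$ admits an infinite walk from $1$ iff some state reachable from $1$ lies on a cycle, and I would prove the two implications separately.

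For the ``only if'' direction, suppose $\sigma$ is a trajectory and fix a witnessing index sequence $\sigma_N=i_1i_2\cdots$ as in Definition~\ref{def:traj of plan}. Among the $k+1$ terms $i_2,i_3,\dots,i_{k+2}$, all lying in the $k$-element set $\{1,\dots,k\}$, two must coincide by the pigeonhole principle; say $i_a=i_b=:m$ with $2\le a<b$. I would then set $s_1=i_1i_2\cdots i_a$ and $s_2=i_a i_{a+1}\cdots i_b$. The successor condition $i_{j+1}\in N_{i_j}$ makes both $s_1$ and $s_2$ finite paths of $RP$; moreover $s_1[1]=i_1=1$, $s_1[end]=i_a=m=i_b=s_2[1]=s_2[end]$, and $|s_1|=a>1$, $|s_2|=b-a+1>1$, so $(s_1,s_2)$ is a reachable cycle. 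Restricting the pigeonhole to positions $\ge 2$ is exactly what guarantees $|s_1|>1$, covering in particular the case $m=1$.

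For the ``if'' direction, given a reachable cycle $(s_1,s_2)$ I would build an index sequence by following $s_1$ into $m$ and then looping around $s_2$ forever, namely
\[
\sigma_N=s_1[1]\cdots s_1[end]\,\bigl(s_2[2]\cdots s_2[end]\bigr)^{\omega}.
\]
This is well defined because $|s_2|>1$ makes the repeated block $s_2[2]\cdots s_2[end]$ nonempty, so $\sigma_N$ is genuinely infinite. It then remains to check the successor condition at every position: inside $s_1$ and inside each copy of $s_2$ it holds because these are paths; at the junction from $s_1[end]=m$ to $s_2[2]$ it holds because $s_2[2]\in N_{s_2[1]}=N_m$; and at the seam between consecutive copies, from $s_2[end]=m$ to the next $s_2[2]$, it holds for the same reason since $s_2[end]=m=s_2[1]$. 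Finally $\sigma_N[1]=s_1[1]=1$, so setting $\sigma[j]=q_{\sigma_N[j]}$ yields, by Definition~\ref{def:traj of plan}, a trajectory generated by $RP$.

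I expect the only real subtlety to be the bookkeeping in the forward direction that forces $|s_1|>1$ even when the recurrent plan state is the initial state $1$; the backward direction and the remaining successor-condition verifications are routine unwindings of the definitions.
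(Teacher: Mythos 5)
Your proof is correct and takes essentially the same approach as the paper's: the forward direction uses finiteness of the plan-state set (pigeonhole) to find a repeated index at a position $>1$ and splits the witnessing sequence into the pair $(\sigma_N[1,j],\sigma_N[j,n])$, while the backward direction uses the identical construction $\sigma_N=s_1\circ\bigl(s_2[2,end]\bigr)^{\omega}$. Your writeup merely makes explicit the junction-condition checks and the $|s_1|>1$, $|s_2|>1$ bookkeeping that the paper leaves implicit.
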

\begin{proof}
(From Right to Left) Let $(s_1,s_2)$ be a reachable cycle of $RP$.
By Definition~\ref{def:cycle of plan}, we have $|s_2|>1$.
Then we set $\sigma_N=s_1\circ (s_2[2,end])^{\omega}$, where $(s_2[2,end])^{\omega}\triangleq s_2[2,end]\circ s_2[2,end]\circ\cdots$.
Since $(s_1,s_2)$ is a reachable cycle of $RP$, it follows from Definition~\ref{def:cycle of plan} that $\sigma_N[1]=1$ and $\sigma_N[i+1]\in N_{\sigma_N[i]}$ for all $i\in\mathbb{N}$.
Then we define an infinite $\sigma\in\{q_1,q_2,\cdots, q_k\}^{\omega}$ as: $\sigma[i]=q_{\sigma_N[i]}$ for all $i\in\mathbb{N}$.
Therefore, since $\sigma_N[1]=1$ and $\sigma_N[i+1]\in N_{\sigma_N[i]}$ for all $i\in\mathbb{N}$, by Definition~\ref{def:traj of plan}, $\sigma$ is generated by $RP$.

(From Left to Right)
Let $\sigma$ be a trajectory generated by $RP$.
Then by Definition~\ref{def:traj of plan}, there exists $\sigma_N\in \{1,2,\cdots,k\}^{\omega}$ such that $\sigma_N[1]=1$ and for all $i\in\mathbb{N}$, $\sigma[i]=q_{\sigma_N[i]}$ and $\sigma_N[i+1]\in N_{\sigma_N[i]}$.
Since the plan state set $\{1,2,\cdots,k\}$ is finite, there exist $j,n\in \mathbb{N}$ such that $1<j<n$ and $\sigma_N[j]=\sigma_N[n]$.
Further, by Definition~\ref{def:cycle of plan}, it is clear that $(\sigma_N[1,j],\sigma_N[j,n])$ is a reachable cycle of $RP$, as desired.
\qquad
\end{proof}

Now we refine steps (6) and (7).
These two steps aim to get a control strategy from a reactive plan.

\textbf{Step (6):} In this step, given a reactive plan $RP$, we will simplify it in this way: for any $(i,q_i,a_i,N_i)$ in $RP$,
if there exist $j_1,j_2,\cdots,j_m \in N_i$ with $m>1$ and $q_{j_1}=q_{j_n}$ for all $n\leq m$, then we remain one of them and remove others from $N_i$.
Thus for any $(i,q_i,a_i,N_i)$ in the simplified reactive plan and for any world state $q$, there exists at most one plan state $j\in N_i$ with $q_j=q$.
Formally, Step (6) is refined in Algorithm~\ref{alg:simple}.

\begin{algorithm}\label{alg:simple}
\caption{Simplifying reactive plan $RP$}
Suppose that $RP=\{(1,q_{1},a_{1},N_{1}), (2,q_{2},a_{2},N_{2}),\cdots, (k,q_{k},a_{k},N_{k})\}$

(1) SimplifyReactivePlan($RP$)\{

(2) $note=0$

(3) \textbf{while} $i\leq k$ and $note=0$ \textbf{do}

(4) \ \ \ \textit{suffix=shortest\_ path(i,i)}

(5) \ \ \ \textbf{if} \textit{suffix}$\neq \emptyset$ \textbf{then}

(6) \ \ \ \ \ \ \textit{prefix=shortest\_ path(1,i)}

(7) \ \ \ \ \ \ \textbf{if} $prefix\neq \emptyset$ \textbf{then}

(8) \ \ \ \ \ \ \ \ \ \ $note=1$;

(9) \ \ \ \ \  \textbf{end if}

(10) \ \  \textbf{end if}

(11) \textbf{end while}

(12) \textbf{for all} $(i,q_i,a_i,N_i)\in RP$

(13) \ \ \ \textbf{for all} $j_1,j_2,\cdots,j_m \in N_i$ with $m>1$ and $q_{j_1}=q_{j_2}=\cdots=q_{j_m}$

(14) \ \ \ \ \ \ \textbf{if} for some $l\leq m$,  there exists $n<|$\textit{prefix}$|$ such that \textit{i=prefix[n]} and \textit{$j_l$=prefix[n+1]} \textbf{then}

(15) \ \ \ \ \ \ \ \ \ $N_i=N_i-\{j_1,\cdots,j_{l-1},j_{l+1},\cdots, j_m\}$ /$*$Remove $j_1,\cdots,j_{l-1},j_{l+1},$ $\cdots, j_m$ from $N_i$ $*$/

(16) \ \ \ \ \ \ \textbf{else if} for some $l\leq m$,  there exists $n<|$\textit{suffix}$|$ such that \textit{i=suffix[n]} and \textit{$j_l$=suffix[n+1]} \textbf{then}

(17) \ \ \ \ \ \ \ \ \ $N_i=N_i-\{j_1,\cdots,j_{l-1},j_{l+1},\cdots, j_m\}$  /$*$Remove $j_1,\cdots,j_{l-1},j_{l+1},$ $\cdots, j_m$ from $N_i$ $*$/

(18) \ \ \ \ \ \ \textbf{else if}

(19) \ \ \ \ \ \ \ \ \ $N_i=N_i-\{j_2,j_3,\cdots, j_m\}$\ \ \ \ \ \ \ \ \  /$*$Remove $j_2,j_3,\cdots, j_m$ from $N_i$ $*$/

(20) \ \ \ \ \ \ \textbf{end if}

(21) \ \ \ \textbf{end for}

(22) \textbf{end for}

(23) Return $RP$\}

\end{algorithm}

In this algorithm, the lines (3)-(11) is used to find a reachable cycle (\textit{prefix},\textit{suffix}).
Amongst, we adopt DijKstra's algorithm~\cite{corman}\cite{dij} to find the shortest pathes of $RP$ from $i$ to $i$ and from $1$ to $i$  (see lines (4) and (6)).
By Lemma~\ref{Lem:reactive cycle} and the completeness of DijKstra's algorithm~\cite{corman}\cite{dij}, \textit{prefix} and \textit{suffix} must can be found in this algorithm if the given reactive plan may generate trajectory.

Suppose that $RP$ may generate trajectory and the reachable cycle (\textit{prefix},\textit{suffix}) has been found.
The lines (12)-(22) aim to simplify the reactive plan $RP$ based \textit{prefix} and \textit{suffix} so that the simplified reactive plan may generate trajectory.
Since \textit{prefix} is the shortest path from 1 to \textit{prefix[end]}, it is clear that there do not exist $i,j<|$\textit{prefix}$|$ such that $i\neq j$ and \textit{prefix[i]=prefix[j]}.
So, for the line (14) in Algorithm~\ref{alg:simple}, there exists at most one natural number $l$ such that $l\leq m$,  \textit{i=prefix[n]} and \textit{$j_l$=prefix[n+1]} for some $n<|$\textit{prefix}$|$.
Similar argument holds for the line (16).
We provide a simple example below to illustrate Algorithm~\ref{alg:simple}.
\begin{example}\label{ex:alg2}
Consider the reactive plan $RP=\{(1,q_1,a_1,\{2\}),(2,q_2,a_2,\{1,4\}),\\ (3,q_3,a_3,\{1\}),(4,q_1,a_4,\{3\})\}$.
We adopt Algorithm~\ref{alg:simple} to simplify $RP$.
It is easy to check that both \textit{suffix} and  \textit{prefix} found in this algorithm are ``$121$''.
For the SCR $(2,q_2,a_2,\{1,4\})\in RP$, since both plan states $1$ and 4 are labeled by $q_1$ and $pre\!fix=121$, plan state 4 is removed from $\{1,4\}$.
One may easily examine that the simplified reactive plan is $\{(1,q_1,a_1,\{2\}),(2,q_2,a_2,\{1\}), (3,q_3,a_3,\{1\}),(4,q_1,a_4,\{3\})\}$.
\end{example}

In the above example, for the plan states 3 and 4 in the simplified reactive plan, there does not exist path from plan state $1$ to these states, although such pathes exist for the original reactive plan.
Thus a natural question arises:  whether the simplification provided in Algorithm~\ref{alg:simple} may result in that the simplified reactive plan can not generate trajectory although the original reactive plan can do so.
The following result reveals that this situation can not arise.
\begin{theorem}\label{th:plan cycle}
Let $RP=\{(1,q_{1},a_{1},N_{1}), (2,q_{2},a_{2},N_{2}),\cdots, (k,q_{k},a_{k},N_{k})\}$ be a reactive plan.
If $RP$ generates trajectory, then so does the simplified reactive plan generated by Algorithm~\ref{alg:simple}.
\end{theorem}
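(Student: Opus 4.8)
The plan is to reduce everything to the existence of reachable cycles via Lemma~\ref{Lem:reactive cycle}, and then to track which edges survive the simplification. Write $RP'$ for the plan returned by Algorithm~\ref{alg:simple} and $N_i'$ for its successor sets; simplification only deletes elements from the sets $N_i$ and changes neither the plan-state numbers nor their labels, so the endpoint and labelling conditions in Definition~\ref{def:cycle of plan} are unaffected. Since $RP$ generates a trajectory, Lemma~\ref{Lem:reactive cycle} gives a reachable cycle, hence by the cited completeness of Dijkstra's algorithm lines~(3)--(11) terminate with a pair $(\mathit{prefix},\mathit{suffix})$ that is a reachable cycle of $RP$; put $i^{\ast}=\mathit{prefix}[end]=\mathit{suffix}[1]=\mathit{suffix}[end]$. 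It then suffices to exhibit a reachable cycle of $RP'$.

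First I would show that all of $\mathit{prefix}$ survives in $RP'$. Fix $n<|\mathit{prefix}|$ and consider the processing of $N_{\mathit{prefix}[n]}$ in lines~(12)--(22). If $\mathit{prefix}[n+1]$ is not a duplicate it is never removed; if it lies in a group with $m>1$, then since the nodes of $\mathit{prefix}$ at positions below $|\mathit{prefix}|$ are distinct (it is a shortest path), the unique prefix-successor of $\mathit{prefix}[n]$ is $\mathit{prefix}[n+1]$, so the test on line~(14) succeeds and retains it. Hence $\mathit{prefix}[n+1]\in N'_{\mathit{prefix}[n]}$ for every $n$, and in particular $i^{\ast}$ is reachable from plan state~$1$ in $RP'$.

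Next I would follow $\mathit{suffix}$ from $i^{\ast}$ inside $RP'$. At the node $\mathit{suffix}[k]$, look at the group containing the suffix-successor $\mathit{suffix}[k+1]$: either line~(14) does not fire and line~(16) retains $\mathit{suffix}[k+1]$, so the suffix edge survives, or line~(14) fires because $\mathit{suffix}[k]$ also lies on $\mathit{prefix}$ and that same group contains its prefix-successor $w$ (with $q_w=q_{\mathit{suffix}[k+1]}$), in which case the edge is rerouted to $w$. This is the delicate case and the main obstacle: a plan state lying on both $\mathit{prefix}$ and $\mathit{suffix}$ whose two successors fall in one world-state group, where the prefix-priority of line~(14) deletes the intended suffix edge. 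The observation that resolves it is that the retained target $w$ is always a \emph{prefix} node. So I would walk $i^{\ast}\to\mathit{suffix}[2]\to\cdots$ until the first reroute, say at some $\mathit{suffix}[k]=u$ landing on a prefix node $w$; every earlier suffix edge survived, so this walk is a genuine path of $RP'$, and splicing the preserved prefix tail $w\to\cdots\to i^{\ast}$ closes a cycle through $i^{\ast}$. If no reroute ever occurs, the whole of $\mathit{suffix}$ survives and is itself such a cycle.

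In either case $RP'$ contains a cycle based at $i^{\ast}$ together with the preserved $\mathit{prefix}$ from $1$ to $i^{\ast}$, i.e.\ a reachable cycle in the sense of Definition~\ref{def:cycle of plan}, and Lemma~\ref{Lem:reactive cycle} then produces a trajectory generated by $RP'$. I expect the splicing step to be where care is needed: one must verify that the rerouted target is always a prefix node (which follows directly from the form of line~(14)) and that the spliced sequence still satisfies the endpoint conditions of a reachable cycle. Both hold, but the index bookkeeping around the ``first reroute'', and the edge case in which $i^{\ast}$ itself happened to carry a conflicting edge, are the points most prone to slips; the splicing argument is uniform enough to absorb that edge case as well.
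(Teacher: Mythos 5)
Your proof is correct and takes essentially the same route as the paper's: both reduce the claim via Lemma~\ref{Lem:reactive cycle} to exhibiting a reachable cycle of the simplified plan, both show every \textit{prefix} edge survives line~(14) because the non-final entries of a shortest path are distinct, and both handle the \textit{suffix} by splicing onto the preserved prefix tail at a point where the suffix meets the prefix (your ``no reroute''/``first reroute'' dichotomy is precisely the paper's Case~1/Case~2). If anything, your explicit choice of the \emph{first} reroute makes rigorous a point the paper glosses over: its Case~2 splices at an arbitrary intersection \textit{prefix[n]=suffix[m]}, which strictly requires choosing the minimal such $m$ so that the initial suffix segment has not already been rerouted.
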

\begin{proof}
Suppose that $RP$ may generate trajectory.
Then, by Lemma~\ref{Lem:reactive cycle} and Algorithm~\ref{alg:simple}, a reachable cycle (\textit{prefix}, \textit{suffix}) of $RP$ must can be found.
Consider the following two cases.

\textbf{Case 1.}
 \textit{prefix[n]}$\neq$\textit{suffix[m]} for any $n<|$\textit{prefix}$|$ and $m<|$\textit{suffix}$|$.
 Then, due to Algorithm~\ref{alg:simple}, it is easy to check that both \textit{prefix} and \textit{suffix} are pathes of the simplified reactive plan.
 Further, since (\textit{prefix},\textit{suffix}) is a reachable cycle of $RP$, by Definition~\ref{def:cycle of plan}, (\textit{prefix},\textit{suffix}) is a reachable cycle of the simplified reactive plan.
Thus by Lemma~\ref{Lem:reactive cycle}, the simplified reactive plan may generate trajectory.

\textbf{Case 2.} \textit{prefix[n]=suffix[m]} for some $n<|$\textit{prefix}$|$ and $m<|$\textit{suffix}$|$.
 Then by Algorithm~\ref{alg:simple}, one may easily examine that both \textit{prefix} and \textit{suffix[1,m]$\circ$prefix[n+1,end]} are pathes of the simplified reactive plan.
 On the other hand, since (\textit{prefix},\textit{suffix}) is a reachable cycle of $RP$, by Definition~\ref{def:cycle of plan}, we get \textit{prefix[end]=suffix[1]=suffix[end]}.
 Then by  Definition~\ref{def:cycle of plan}, (\textit{prefix},\textit{suffix[1,m]$\circ$prefix[n+1,end]}) is a reachable cycle of the simplified reactive plan.
Therefore, by Lemma~\ref{Lem:reactive cycle}, the simplified reactive plan may generate trajectory.
\end{proof}

\begin{theorem}\label{th:sub plan}
Let $T=(Q,A,B,\longrightarrow,O,H)$ be a finite, non-blocking alternating transition system, $\phi$ an LTL$_{-X}(\mathbb{P})$ formula, $\prod$ a valuation function and let $RP=\{(1,q_{1},a_{1},N_{2}), \cdots,(k,q_{k},a_{k},N_{k})\}$ be a reactive plan of~$T$.
We adopt Algorithm~\ref{alg:simple} to simplify $RP$.
Then we have

(1) For any $(i,q_i,a_i,N_i)$ in the simplified reactive plan and for any $q\in Q$, there exists at most one plan state $j\in N_i$ with $q_j=q$.

(2)
If $RP$ satisfies $\phi$ then the simplified reactive plan also satisfies $\phi$.
\end{theorem}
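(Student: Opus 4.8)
The plan is to handle the two parts separately: part~(1) follows by directly reading off the behaviour of the simplification loop in Algorithm~\ref{alg:simple}, while part~(2) reduces to a subset relation between successor sets together with Theorem~\ref{th:plan cycle}. For part~(1), I would argue from lines (12)--(22) of Algorithm~\ref{alg:simple}. Fix an SCR $(i,q_i,a_i,N_i)$. The inner loop at line (13) ranges over the maximal groups $j_1,\dots,j_m$ of successors in $N_i$ that share a common world-state label, i.e.\ $q_{j_1}=\cdots=q_{j_m}$; since each plan state carries a single world-state label, these label classes partition $N_i$. For every class with $m>1$, exactly one of the three branches at lines (14)--(15), (16)--(17), or (18)--(19) fires and deletes all members of that class except one (the \textit{prefix}-representative, the \textit{suffix}-representative, or $j_1$, respectively). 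As already observed just after the algorithm, the shortest-path property of \textit{prefix} and \textit{suffix} guarantees there is at most one eligible index $l$ in the first two branches, so each deletion is unambiguous. After the loop every label class contributes at most one surviving plan state to $N_i$, which is precisely the assertion of part~(1).

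For part~(2), the key observation is that the simplification in lines (12)--(22) never alters the plan-state numbers, world states, or actions of any SCR and only deletes elements from the successor sets. Writing $N_i'$ for the successor set of plan state $i$ in the simplified plan, we therefore have $N_i'\subseteq N_i$ for every $i$. I would then invoke Definition~\ref{def:traj of plan}: if $\sigma$ is any trajectory of the simplified plan, witnessed by an index sequence $\sigma_N$ with $\sigma_N[1]=1$ and $\sigma_N[j+1]\in N_{\sigma_N[j]}'$ for all $j$, then the inclusion $N_i'\subseteq N_i$ shows that the very same $\sigma_N$ witnesses $\sigma$ as a trajectory of $RP$. Hence every trajectory generated by the simplified plan is already a trajectory generated by $RP$.

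It then remains to assemble these facts against the two clauses of Definition~\ref{def:satis plan}. Because $RP$ satisfies $\phi$, every trajectory of $RP$ satisfies $\phi$; by the trajectory inclusion just established, every trajectory of the simplified plan satisfies $\phi$ as well, which discharges the universal clause. For the existence clause, note that $RP$ satisfying $\phi$ entails that $RP$ generates at least one trajectory, so Theorem~\ref{th:plan cycle} guarantees that the simplified plan also generates a trajectory. Both clauses of Definition~\ref{def:satis plan} are thus met, proving part~(2). The only point demanding care here is the bookkeeping around this two-part definition: the subset argument controls only the universal clause, so the existence clause must be discharged separately via Theorem~\ref{th:plan cycle} rather than being conflated with it.
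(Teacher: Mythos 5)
Your proposal is correct and follows essentially the same route as the paper's own (much terser) proof: part~(1) read off from the removal loop in Algorithm~\ref{alg:simple}, and part~(2) via the inclusion of the simplified plan's trajectories among those of $RP$ for the universal clause of Definition~\ref{def:satis plan}, with Theorem~\ref{th:plan cycle} invoked separately for the existence clause. Your explicit bookkeeping of the two clauses, and the observation that $N_i'\subseteq N_i$ lets the same witnessing index sequence serve for $RP$, simply makes precise what the paper dismisses as ``clearly.''
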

\begin{proof}
(1) holds trivially. We prove (2) below. Clearly, by Algorithm~\ref{alg:simple}, the trajectories generated by the simplified reactive plan can be generated by $RP$.
Therefore, by Theorem~\ref{th:plan cycle} and Definition~\ref{def:satis plan}, the conclusion (2) holds.
\qquad
\end{proof}

\textbf{Step (7).}
Next, we refine Step (7) in Algorithm 1.
In this step, a control strategy will be obtained from the simplified reactive plan.
For this purpose, some result and notion are provided below.
\begin{lemma}\label{lem:plan trajectory}
Let $T=(Q,A,B,\longrightarrow,O,H)$ be a finite, non-blocking alternating transition system and let $RP=\{(1,q_{1},a_{1},N_{2}), \cdots,(k,q_{k},a_{k},N_{k})\}$ be a reactive plan of~$T$.
Suppose that for any $(i,q_i,a_i,N_i)\in RP$ and $q\in Q$, there exists at most one plan state $j\in N_i$ with $q_j=q$.
Then for any $s\in Q^{+}$, there exists at most one path $s_N\in \{1,2,\cdots,k\}^{+}$ such that $|s_N|=|s|$, $s_N[1]=1$ and $s[j]=q_{s_N[j]}$ for all $j\leq |s_N|$.
\end{lemma}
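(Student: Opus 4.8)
The plan is to prove the uniqueness statement by induction on the position index, exploiting the hypothesis that each successor set $N_i$ contains at most one plan state bearing a given world-state label. Concretely, I would suppose that $s_N$ and $s_N'$ are two paths of $RP$ each satisfying $|s_N|=|s_N'|=|s|$, $s_N[1]=s_N'[1]=1$, and $s[j]=q_{s_N[j]}=q_{s_N'[j]}$ for all $j\le|s|$, and then establish $s_N=s_N'$ by showing $s_N[j]=s_N'[j]$ for every $j\le|s|$.

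For the base case $j=1$ the two agree, since both are forced to equal $1$ by the condition $s_N[1]=s_N'[1]=1$. For the inductive step, assume $s_N[j]=s_N'[j]$. Because $s_N$ and $s_N'$ are paths of $RP$ (the adjacency condition of Definition~\ref{def:cycle of plan}), both $s_N[j+1]$ and $s_N'[j+1]$ lie in the common successor set $N_{s_N[j]}=N_{s_N'[j]}$. Moreover they carry the same label, namely $q_{s_N[j+1]}=s[j+1]=q_{s_N'[j+1]}$. Applying the hypothesis of the lemma to the SCR $(s_N[j],q_{s_N[j]},a_{s_N[j]},N_{s_N[j]})$ and the world state $q=s[j+1]$ — there is at most one plan state in $N_{s_N[j]}$ labeled by $s[j+1]$ — I conclude $s_N[j+1]=s_N'[j+1]$, which closes the induction and yields $s_N=s_N'$.

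I expect the argument itself to be routine once it is set up; the single point needing care is making explicit that the word \emph{path} in the statement carries the adjacency constraint $s_N[j+1]\in N_{s_N[j]}$ inherited from Definition~\ref{def:cycle of plan} (equivalently the constraint $i_{j+1}\in N_{i_j}$ of Definition~\ref{def:traj of plan}), since without it the successor at each step would not be pinned down and uniqueness would genuinely fail. A secondary bookkeeping remark is that if no admissible path exists at all — for instance when $s[1]\neq q_1$ — then the conclusion ``at most one'' holds vacuously, so the induction is only invoked under the assumption that two candidate paths are available to be compared.
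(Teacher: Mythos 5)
Your proof is correct and follows exactly the route the paper intends: the paper's entire proof is the single line ``Induction on the length of $s$,'' and your argument is precisely that induction spelled out, with the uniqueness of the successor in $N_{s_N[j]}$ labeled by $s[j+1]$ supplied by the lemma's hypothesis. Your closing remarks (the adjacency constraint from Definition~\ref{def:cycle of plan} and the vacuous ``at most one'' case) are sensible bookkeeping that the paper leaves implicit.
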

\begin{proof}
Induction on the length of $s$. \qquad
\end{proof}

\begin{definition}\label{def:construct strategy}
Let $T=(Q,A,B,\longrightarrow,O,H)$ be a finite, non-blocking alternating transition system and let $RP=\{(1,q_{1},a_{1},N_{2}), \cdots,(k,q_{k},a_{k},N_{k})\}$ be a reactive plan of~$T$.
Suppose that for any $(i,q_i,a_i,N_i)\in RP$ and state $q\in Q$, there exists at most one plan state $j\in N_i$ with $q_j=q$.
The control strategy $f_{R\!P}:Q^{+}\rightarrow A$ generated by reactive plan $RP$ is defined as: for any $s\in Q^{+}$,
if there exists a path $s_N\in \{1,2,\cdots,k\}^{+}$ such that $|s_N|=|s|$, $s_N[1]=1$ and $s[j]=q_{s_N[j]}$ for all $j\leq |s|$ then we set $f_{R\!P}(s)=a_{s_N[end]}$, otherwise we put $f_{R\!P}(s)=a_{1}$.
\end{definition}

By Lemma~\ref{lem:plan trajectory}, the control strategy $f_{RP}$ defined above is well-defined.
The function $FunctionStrategy(RP)$ in Step (7) in Algorithm~\ref{alg} is capable of producing such control strategy.
The algorithm realizing this function is presented in Algorithm~\ref{alg:strategy}.

\begin{algorithm}\label{alg:strategy}

\caption{Producing control strategy $f_{RP}$}

\SetKwInOut{Input}{input}

Suppose that $RP=\{(1,q_{1},a_{1},N_{1}), (2,q_{2},a_{2},N_{2}),\cdots, (k,q_{k},a_{k},N_{k})\}$

FunctionStrategy($RP$)\{

(1) \Input{$s$    \ \ \ \ \ \ \ \         /*$s$ is an array denoting a sequence of world states*/}

(2) SeqOfPS[1]=1       /*SeqOfPS is an array denoting a sequence of plan states*/

(3) \textbf{if} $s[1]\neq q_{1}$ \textbf{then}

(4) \ \ \ Return $a_1$

(5) \textbf{end if}

(6) $i=2$

(7) \textbf{while} $i\leq |s|$ \textbf{do}

(8) \ \ \ $k=$SeqOfPS$[i-1]$

(9) \ \ \ \textbf{if} $s[i]=q_j$ for some $j\in N_k$ \textbf{then}

(10) \ \ \ \ \ \ SeqOfPS$[i]=j$

(11) \ \ \ \ \ \ $i=i+1$

(12) \ \ \ \textbf{else}

(13) \ \ \ \ \ \ Return $a_1$

(14) \ \ \ \textbf{end if}

(15) \textbf{end while}

(16) $k=$SeqOfPS$[i-1]$

(17) Return $a_k$\}

\end{algorithm}

 Due to the following result, if the simplified reactive plan obtained by performing Algorithm~\ref{alg:simple} satisfies formula $\phi$ then it can generate a control strategy $f_{RP}$ so that $\sigma\models\phi$ for all $\sigma\in Out(q_1,f_{RP})$.
\begin{theorem}\label{th:plan to strategy}
Let $T=(Q,A,B,\longrightarrow,O,H)$ be a finite, non-blocking alternating transition system, $\phi$ an LTL$_{-X}(\mathbb{P})$ formula, $\prod$ a valuation function and let $RP=\{(1,q_{1},a_{1},N_{2}), \cdots,(k,q_{k},a_{k},N_{k})\}$ be a reactive plan of $T$. Suppose that for any $(i,q_i,a_i,N_i)\in RP$ and state $q\in Q$, there exists at most one plan state $j\in N_i$ with $q_j=q$.
 Let $f_{RP}$ be the control strategy generated by $RP$.
Then we have

(1) $Out(q_{1},f_{RP})$ exactly contains trajectories generated by the reactive plan $RP$,

(2) if $RP$ satisfies $\phi$ then $\sigma\models\phi$ for any $\sigma\in Out(q_{1},f_{RP})$.
\end{theorem}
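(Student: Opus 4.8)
The plan is to derive conclusion (2) as an immediate consequence of conclusion (1) together with Definition~\ref{def:satis plan}, so that the real work is concentrated in (1). I would first record a preliminary equivalence that lets me pass freely between plan-state transitions and transitions of $T$: by Definition~\ref{Def:reactive plan} the elements of $N_i$ are precisely the plan states that are nondeterministic successors of $i$ when $a_i$ is executed in the agent $(q_1,succ_T)$, and by Definition~\ref{def:transition function} a plan state $j$ is such a successor if and only if $q_i\xrightarrow{a_i,b}q_j$ for some $b\in B$. Hence $j\in N_i$ iff $q_i\xrightarrow{a_i,b}q_j$ for some $b\in B$. This equivalence is the bridge used in both directions of (1).

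For the inclusion $Out(q_1,f_{RP})\subseteq\{\text{trajectories generated by }RP\}$, I would take $\sigma\in Out(q_1,f_{RP})$ and construct a witness $\sigma_N\in\{1,2,\cdots,k\}^{\omega}$ by induction, simultaneously proving that $\sigma_N[1,i]$ is the unique path over the prefix $\sigma[1,i]$. The base step sets $\sigma_N[1]=1$, which is forced since $\sigma[1]=q_1=q_{\sigma_N[1]}$. For the inductive step, the uniqueness of $\sigma_N[1,i]$ gives, via Definition~\ref{def:construct strategy}, that $f_{RP}(\sigma[1,i])=a_{\sigma_N[i]}$; membership of $\sigma$ in $Out(q_1,f_{RP})$ then yields $b_i\in B$ with $q_{\sigma_N[i]}\xrightarrow{a_{\sigma_N[i]},b_i}\sigma[i+1]$, so by the preliminary equivalence there is $j\in N_{\sigma_N[i]}$ with $q_j=\sigma[i+1]$, and the at-most-one hypothesis makes this $j$ unique. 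Setting $\sigma_N[i+1]=j$ extends the path, and Lemma~\ref{lem:plan trajectory} confirms it is again the unique path over $\sigma[1,i+1]$. The infinite sequence $\sigma_N$ so obtained satisfies $\sigma_N[1]=1$, $\sigma_N[i+1]\in N_{\sigma_N[i]}$ and $q_{\sigma_N[i]}=\sigma[i]$, so by Definition~\ref{def:traj of plan} $\sigma$ is generated by $RP$.

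For the reverse inclusion, given a trajectory $\sigma$ generated by $RP$ with witness $\sigma_N$, the prefix $\sigma_N[1,i]$ is a path over $\sigma[1,i]$, so Lemma~\ref{lem:plan trajectory} makes it the unique such path and hence $f_{RP}(\sigma[1,i])=a_{\sigma_N[i]}$; since $\sigma_N[i+1]\in N_{\sigma_N[i]}$, the preliminary equivalence supplies $b_i\in B$ with $\sigma[i]=q_{\sigma_N[i]}\xrightarrow{a_{\sigma_N[i]},b_i}q_{\sigma_N[i+1]}=\sigma[i+1]$, whence $\sigma\in Out(q_1,f_{RP})$. This establishes (1). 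Conclusion (2) is then immediate: if $RP$ satisfies $\phi$, then by Definition~\ref{def:satis plan} every trajectory generated by $RP$ satisfies $\phi$, and by (1) these trajectories are exactly the elements of $Out(q_1,f_{RP})$, so $\sigma\models\phi$ for every $\sigma\in Out(q_1,f_{RP})$.

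I expect the main obstacle to be the inductive alignment in the first inclusion: one must verify that the unique finite path $f_{RP}$ implicitly consults on each prefix $\sigma[1,i]$ extends coherently into a single infinite sequence $\sigma_N$, and that the action $f_{RP}$ actually returns at step $i$ coincides with $a_{\sigma_N[i]}$, the action $RP$ prescribes. This is precisely where the hypothesis that each $N_i$ carries at most one plan state with a given world-state label—together with Lemma~\ref{lem:plan trajectory}—is indispensable, since it forces the plan-state history to be a function of the world-state history and thereby reconciles the history-dependent choice of the control strategy with the nondeterministic successor structure of the plan.
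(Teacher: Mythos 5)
Your proposal is correct and takes essentially the same route as the paper: the paper's own proof of this theorem is a one-line appeal to Definitions~\ref{def:traj of plan}, \ref{def:control strategy} and~\ref{def:construct strategy} (declaring (1) ``easy to prove'' with (2) immediate), and your argument is precisely the detailed unfolding of that appeal, using Lemma~\ref{lem:plan trajectory} and the at-most-one hypothesis exactly where the paper's construction of $f_{RP}$ relies on them. The only hair worth noting is the base case: since Definition~\ref{def:cycle of plan} requires paths to have length greater than $1$, $f_{RP}$ on the singleton $\sigma[1,1]$ is fixed by the ``otherwise'' clause to $a_1$, which happily coincides with $a_{\sigma_N[1]}$, so your induction goes through unchanged.
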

\begin{proof}
By Definition~\ref{def:traj of plan}, \ref{def:control strategy} and~\ref{def:construct strategy}, it is easy to prove (1). Then (2) follows immediately. \qquad
\end{proof}

\begin{corollary}\label{co:algorithm}
Let $T=(Q,A,B,\longrightarrow,O,H)$ be a finite, non-blocking alternating transition system, $\phi$ an LTL$_{-X}(\mathbb{P})$ formula and let $\prod:Q\rightarrow 2^{\mathbb{P}}$ be a valuation function.
If there exists a reactive plan $RP$ of $T$ satisfying $\phi$, then Algorithm~\ref{alg} can find an initial state $q$ and a control strategy $f$ so that  $\sigma\models\phi$ for all $\sigma\in Out(q,f)$.
\end{corollary}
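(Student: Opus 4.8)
The plan is to verify two facts about any run of Algorithm~\ref{alg} under the stated hypothesis: first, that the run exits through step (8) rather than falling through to the \texttt{Return false} at step (11); and second, that whatever pair $(q_0, f_{RP})$ is returned at step (8) actually enforces $\phi$. Both facts will follow by chaining results already in hand, with Corollary~\ref{Th:kab's algorithm} and Theorems~\ref{th:sub plan} and~\ref{th:plan to strategy} doing the real work.

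For termination, I would first unfold the hypothesis. By the definition of a reactive plan of $T$, the assumed plan $RP$ satisfying $\phi$ is a reactive plan of $(q^{*}, succ_T)$ for some $q^{*}\in Q$, namely the world state labeling its plan state $1$. Since the for-loop at step (3) ranges over all of $Q$, the iteration with $q_0 = q^{*}$ is reached unless the loop has already returned at step (8) for some earlier state. If that iteration is reached, then a reactive plan of $(q^{*}, succ_T)$ satisfying $\phi$ exists, so by the completeness half of Corollary~\ref{Th:kab's algorithm} Kabanza et al.'s algorithm returns a reactive plan $RP_{kab}$ at step (4); the test at step (5) succeeds and the algorithm returns at step (8). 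Either way the run exits through step (8), so false is never returned.

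For correctness, I would analyze the single iteration in which step (8) fires, say with initial state $q_0$. The plan $RP_{kab}$ produced at step (4) satisfies $\phi$ by the correctness half of Corollary~\ref{Th:kab's algorithm}. Passing $RP_{kab}$ through Algorithm~\ref{alg:simple} at step (6), Theorem~\ref{th:sub plan} yields a simplified plan $RP$ that both still satisfies $\phi$ (conclusion (2)) and enjoys the single-successor-per-world-state property (conclusion (1)) required as a hypothesis by Theorem~\ref{th:plan to strategy}. The strategy $f_{RP}$ built at step (7) is exactly the control strategy generated by $RP$ in the sense of Definition~\ref{def:construct strategy}. Noting that plan state $1$ of $RP$ is labeled by $q_0$, Theorem~\ref{th:plan to strategy}(2) gives $\sigma\models\phi$ for every $\sigma\in Out(q_0, f_{RP})$, which is precisely the returned pair.

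I do not expect a genuine obstacle here; the only care needed is bookkeeping. The one point to watch is matching the initial state across the lemmas: Theorem~\ref{th:plan to strategy} states its conclusion in terms of $Out(q_1, f_{RP})$ with $q_1$ the label of plan state $1$, and one must check that this $q_1$ coincides with the loop variable $q_0$ returned at step (8), which holds because Kabanza et al.'s algorithm always builds its plan starting from the supplied initial world state. A secondary check is that Theorem~\ref{th:sub plan} supplies both the preserved satisfaction and the structural hypothesis Theorem~\ref{th:plan to strategy} demands; since these are exactly conclusions (2) and (1) of Theorem~\ref{th:sub plan}, the two results compose without gap, and the corollary follows.
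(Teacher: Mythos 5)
Your proposal is correct and follows exactly the route of the paper, whose entire proof is the one-line citation ``Follows from Corollary~\ref{Th:kab's algorithm}, Algorithm~\ref{alg}, Theorem~\ref{th:sub plan} and~\ref{th:plan to strategy}''; you have simply unfolded that citation into its termination and correctness halves. The bookkeeping points you flag (the loop variable $q_0$ coinciding with the label $q_1$ of plan state~$1$, and Theorem~\ref{th:sub plan} supplying both the structural hypothesis and preserved satisfaction for Theorem~\ref{th:plan to strategy}) are handled correctly.
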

\begin{proof}
Follows from Corollary~\ref{Th:kab's algorithm}, Algorithm~\ref{alg}, Theorem~\ref{th:sub plan} and~\ref{th:plan to strategy}.
\end{proof}

Inspired by Theorem~\ref{th:plan to strategy}, someone may conjecture that given an initial state $q_{0}$ and a control strategy $f$, there exists a reactive plan $RP$ such that $Out(q_{0},f)$ exactly contains trajectories generated by $RP$. This conjecture does not always hold. A counterexample is given below.

\begin{figure}[t]
\begin{center}
\centerline{\includegraphics[scale=0.8]{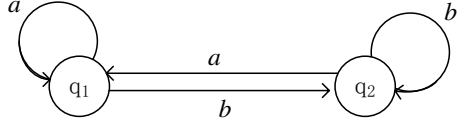}}
\end{center}
\caption{Finite, non-blocking alternating transition system}\label{Example: reactive agent 2}
\end{figure}

\begin{example}\label{Example: not equivalent}
Consider a finite, non-blocking alternating transition system
\begin{center}
$T=(\{q_{1},q_{2}\},\{a,b\},\{1\},\longrightarrow,\{q_{1},q_{2}\},1_{\{q_{1},q_{2}\}})$,
 \end{center}
 where $\longrightarrow$ is described by Fig~\ref{Example: reactive agent 2}.
 Since there only exists one disturbance label, we do not indicate it in this figure.
A control strategy $f:\{q_{1},q_{2}\}^{+}\rightarrow \{a,b\}$ is defined as for any $s\in \{q_{1},q_{2}\}^{+}$,
\begin{equation*}
f(s)=\left\{\begin{aligned}& b   &\textrm{  if $|s|=n(n+3)/2-1$  for some $n\in \mathbb{N}$}\\
& a   &\textrm{otherwise}
\end{aligned}\right.
\end{equation*}
Define a family of finite sequences $s_{k}$ ($k\in\mathbb{N}$) as: $s_{1}=q_{1}q_{2}$ and for any $k>1$, $s_{k}=q_{1}s_{k-1}$.
Let $\sigma=s_{1}s_{2}s_{3}\cdots$.
Thus $\sigma\neq\sigma[1,n](\sigma[n+1,m])^{\omega}$ for any $n,m\in\mathbb{N}$ with $n<m$. It is easy to check that $Out(q_{1},f)=\{\sigma \}$.

Now we show that there does not exist a reactive plan such that $\sigma$ is a trajectory generated by this plan. Suppose that $\sigma$ is generated by the reactive plan $RP=\{(1,q_{1},a_{1},N_{1}), (2,q_{2},a_{2},N_{2}),\cdots, (k,q_{k},a_{k},N_{k})\}$.
Then there exists a sequence $\sigma_{N}=i_{1}i_{2}\cdots$ over $\{1,2,\cdots, k\}$ such that $i_{1}=1$ and for all $j\in\mathbb{N}$, $q_{i_{j}}=\sigma[j]$ and $i_{j+1}\in N_{i_{j}}$. Since $\{1,2,\cdots, k\}$ is a finite set, we have $i_{l}=i_{m}$ for some $l< m$.
On the other hand, since $T$ is determined, we get $N_{i_{j}}=\{i_{j+1}\}$ for all $j\in\mathbb{N}$.
Further, it follows from $i_{l}=i_{m}$ that $i_{l+1}=i_{m+1}$. Similarly, we have $i_{l+j}=i_{m+j}$ for all $j\in\mathbb{N}$. Thus $\sigma_{N}=i_{1}i_{2}\cdots i_{l}\circ(i_{l+1}\cdots i_{m})^{\omega}$ and then $\sigma=q_{i_{1}}q_{i_{2}}\cdots q_{i_{l}}\circ(q_{i_{l+1}}\cdots q_{i_{m}})^{\omega}$.
This contradicts that for any $n,m\in\mathbb{N}$ with $n<m$, $\sigma\neq\sigma[1,n]\sigma[n+1,m]^{\omega}$.  \qquad
\end{example}
\section{Correctness and completeness of control strategy algorithm}
The previous section presents a control strategy algorithm to solve Problem~\ref{problem1}.
This section will deal with its correctness and completeness.
The former is ensured by the result below.

\begin{theorem}\label{th:plan to strategy1}
Given a finite, non-blocking alternating transition system $T=(Q,A,B,\longrightarrow, O,H)$, an LTL$_{-X}(\mathbb{P})$ formula $\phi$ and  a valuation function $\prod:Q\rightarrow 2^{\mathbb{P}}$, if control strategy algorithm returns a state $q_{0}$ and a control strategy $f_{RP}$, then $\sigma\models\phi$ for any $\sigma\in Out(q_{0},f_{RP})$.
\end{theorem}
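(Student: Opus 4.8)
The plan is to prove this correctness theorem by tracing the single return path of Algorithm~\ref{alg} and chaining together the three correctness results already established for its constituent sub-procedures. Since the algorithm returns a state $q_0$ together with a control strategy $f_{RP}$, it must have exited through steps (5)--(8) for this particular value of $q_0$. Reading off those steps, this means: Kabanza et al.'s algorithm returned a reactive plan $RP_{kab}$ of the reactive agent $(q_0, succ_T)$ enforcing $\phi$ w.r.t.~$\prod$ (step (4)); this plan was simplified to $RP$ via Algorithm~\ref{alg:simple} (step (6)); and $f_{RP}$ was produced from $RP$ via Algorithm~\ref{alg:strategy} (step (7)). So the entire proof amounts to pushing the guarantee ``$\phi$ is satisfied'' forward through these three transformations.

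The key steps, in order, would be as follows. First, since Kabanza et al.'s algorithm actually returned the plan $RP_{kab}$, Corollary~\ref{Th:kab's algorithm} yields that $RP_{kab}$ satisfies $\phi$ w.r.t.~$\prod$. Second, apply Theorem~\ref{th:sub plan} to $RP_{kab}$: part~(2) gives that the simplified plan $RP$ still satisfies $\phi$, while part~(1) gives that $RP$ enjoys the structural property that for each SCR $(i,q_i,a_i,N_i)$ and each world state $q$ there is at most one plan state $j\in N_i$ with $q_j=q$. Third, observe that this structural property is precisely the standing hypothesis required by Theorem~\ref{th:plan to strategy}; hence that theorem is applicable to $RP$ and its induced strategy $f_{RP}$, and its part~(2) delivers $\sigma\models_{\prod}\phi$ for every $\sigma\in Out(q_1,f_{RP})$, where $q_1$ is the world state labeling plan state $1$ of $RP$.

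The final bookkeeping step is to identify the initial states. By the convention recorded after Definition~\ref{Def:reactive plan} (plan state $1$ always carries the initial world state of the underlying reactive agent), the world state $q_1$ labeling plan state $1$ of $RP$ is exactly the $q_0$ passed to $(q_0,succ_T)$ in step (4). Consequently $Out(q_1,f_{RP})=Out(q_0,f_{RP})$, and the conclusion $\sigma\models\phi$ for all $\sigma\in Out(q_0,f_{RP})$ follows at once.

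I do not anticipate a genuine mathematical obstacle here, since the statement is a composition lemma whose content lives entirely in the previously proved results; the real work has already been done in establishing Corollary~\ref{Th:kab's algorithm} and Theorems~\ref{th:sub plan} and~\ref{th:plan to strategy}. The only points demanding care are precisely the \emph{hypothesis matching} at each link in the chain: one must verify that simplification (Algorithm~\ref{alg:simple}) is exactly what supplies the uniqueness-of-labeled-successor precondition of Theorem~\ref{th:plan to strategy}, so that the theorem can legally be invoked, and that the initial world state $q_1$ of the returned plan coincides with the returned state $q_0$. If either of these identifications were loose, the conclusion would be stated for the wrong initial state or the strategy $f_{RP}$ might fail to be well-defined; getting them right is the substance of the argument.
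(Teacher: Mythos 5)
Your proposal is correct and takes essentially the same route as the paper's own proof, which likewise traces the return path of Algorithm~\ref{alg} and chains the correctness of Kabanza et al.'s algorithm (Corollary~\ref{Th:kab's algorithm}) with Theorem~\ref{th:sub plan} and Theorem~\ref{th:plan to strategy}. The hypothesis-matching details you spell out (that Algorithm~\ref{alg:simple} supplies the uniqueness precondition of Theorem~\ref{th:plan to strategy}, and that the world state labeling plan state $1$ coincides with the returned $q_0$) are left implicit in the paper's terser argument but are exactly the right justifications.
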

\begin{proof}
Suppose that control strategy algorithm returns a state $q_{0}$ and a control strategy $f_{RP}$.
Then by Algorithm~\ref{alg}, a reactive plan $RP$ satisfying $\phi$ is found.
Thus by Theorem~\ref{th:sub plan} and~\ref{th:plan to strategy}, we have $\sigma\models\phi$ for any $\sigma\in Out(q_{0},f_{RP})$.
\end{proof}

The rest of this section concerns itself with the completeness of control strategy algorithm.
That is, we consider the following question: given a finite, non-blocking alternating transition system $T$ and an LTL$_{-X}(\mathbb{P})$ formula $\phi$, whether this algorithm must can find an initial state and a control strategy for $T$ enforcing $\phi$ if such state and control strategy exist?
We will provide a partial answer for this question.
Before dealing with this issue, some related notions and results are recalled.

\begin{definition}\label{Def:buchi}
A B$\ddot{u}$chi automaton is a tuple $\mathcal{A}=(S,S_{0},L,\rightarrow_{\mathcal{A}},F)$, where

$\bullet$ $S$ is a finite set of states;

$\bullet$ $S_{0}\subseteq S$ is a set of initial states;

$\bullet$ $L$ is an input  alphabet;

$\bullet$ $\rightarrow_{\mathcal{A}}\subseteq S\times L \times S$ is a transition relation;

$\bullet$ $F\subseteq S$ is a set of accepting states.

An infinite sequence $\sigma \in S^{\omega}$ is said to be a run accepted by $\mathcal{A}$ if and only if $\sigma[1]\in S_{0}$, $\sigma[i]\xrightarrow{a_{i}}_{\mathcal{A}}\sigma[i+1]$ for all $i\in \mathbb{N}$ and there exists $x\in F$ such that $x$ appears infinitely often in $\sigma$.

The  B$\ddot{u}$chi automaton $\mathcal{A}$ is said to be total if both $S_{0}$ and $\{x':x\xrightarrow{l} x'\}$ are singleton sets for any $x\in S$ and $l\in L$.
\end{definition}

\begin{definition}\label{Def:buchi accept}
Let $\mathcal{A}=(S,S_{0},L,\rightarrow_{\mathcal{A}},F)$ be a  B$\ddot{u}$chi automaton.
An infinite sequence $\sigma_{L}\in L^{\omega}$ is accepted by the B$\ddot{u}$chi automaton  $\mathcal{A}$ if and only if there exists a run $\sigma$ accepted by $\mathcal{A}$ such that $\sigma[i]\xrightarrow{\sigma_{L}[i]}_{\mathcal{A}}\sigma[i+1]$ for all $i\in \mathbb{N}$.
\end{definition}

In \cite{Wolper}, it was proven that for any LTL$_{-X}(\mathbb{P}$) formula $\phi$, there exists a B$\ddot{u}$chi automaton $\mathcal{A}_{\phi}$ with input alphabet $2^{\mathbb{P}}$ which accepts exactly the sequences $\sigma\in (2^{\mathbb{P}})^{\omega}$ satisfying formula $\phi$. The interested reader is referred to \cite{Gastin,Gerth,Somenzi,Wolper,Wolper:2} for this topic.

\begin{definition}\label{def:total}
Let $\mathbb{P}$ be a set of atomic propositions. An LTL$_{-X}(\mathbb{P})$ formula $\phi$ is said to be total if there exists a total B$\ddot{u}$chi automaton $\mathcal{A}_{\phi}$ with input alphabet $ 2^{\mathbb{P}}$ such that $\mathcal{A}_{\phi}$ accepts exactly the sequences $\sigma\in (2^{\mathbb{P}})^{\omega}$ satisfying $\phi$.
\end{definition}

Adopting the tool LTL2BA provided by Oddoux and Gastin \cite{Odd}, we may check that the following formulae are total:  $p_{1}\mathbf{U}p_{2}$, $\Box (p_{1}\mathbf{U}p_{2})$, $\diamond (p_{1}\mathbf{U}p_{2})$, $\Box (p_{1}\rightarrow p_{2})$, $\Box\diamond (p_{1}\rightarrow p_{2})$, $\Box (p_{1}\rightarrow \diamond p_{2})$, $\diamond p\wedge\diamond q\wedge\diamond t\wedge\diamond r$, and so on \footnote{The connective $\rightarrow$ and temporal operators $\Box$ and $\diamond$ can be defined as usual, see~\cite{tab:1,Wolper:2}.}.
Some of these formula are considered as control specifications in \cite{fain:2}.

\textbf{Convention.}
\textit{For convenience, for any  total LTL$_{-X}(\mathbb{P})$ formula $\phi$, $\mathcal{A}_{\phi}$ denotes a total B$\ddot{u}$chi automaton with input alphabet $ 2^{\mathbb{P}}$ which accepts exactly the sequences $\sigma\in (2^{\mathbb{P}})^{\omega}$ satisfying $\phi$.}

In the remainder of this section, we will prove that the control strategy algorithm in Algorithm~\ref{alg} is complete w.r.t.~total LTL$_{-X}(\mathbb{P})$ formulae.
Formally, we want to demonstrate that, given a finite, non-blocking alternating transition system $T$, an LTL$_{-X}(\mathbb{P})$ formula $\phi$ and a valuation function $\prod$,
if $\phi$ is total and there exists a state $q_0$ and a control strategy $f_{0}$ so that $\sigma\models\phi\textrm{ for all }\sigma\in Out(q_{0},f_{0})$, then the control strategy algorithm can find an initial state $q$ and a control strategy $f$ of $T$ enforcing $\phi$.
According to Corollary~\ref{co:algorithm}, it is enough to prove that there exists a reactive plan of $T$ satisfying $\phi$.
So in the rest of this section, we will construct such reactive plan.
The desired reactive plan will be obtained from the production automaton of $T$ and $\mathcal{A}_{\phi}$ defined below.
Similar constructions have appeared in \cite{fain:2,kloe:1,tab:1}.

\begin{definition}\label{Def:product automaton}
Let $T=(Q,A,B,{\longrightarrow,} O,H)$ be a finite, non-blocking alternating transition system, $q_{0}\in Q$, $\phi$ a total LTL$_{-X}(\mathbb{P})$ formula, ${\mathcal{A}_{\phi}=(S,\{x_{0}\},2^{\mathbb{P}},\rightarrow_{\mathcal{A}_{\phi}},F)}$ and let
$\prod:Q\rightarrow 2^{\mathbb{P}}$ be a valuation function.
The product automaton of the pair $(T,q_0)$ and $\mathcal{A}_{\phi}$ is defined as $\mathcal{A}_{T,q_0}^{\phi}=(S_{T},S_{T}^{0},A, B,\rightarrow, F_{T})$, where

$\bullet$ $S_{T}=Q\times S$;

$\bullet$ $S_{T}^{0}=\{(q_{0},x_{0})\}$;

$\bullet$ $\rightarrow \subseteq S_{T}\times A\times B \times  S_{T}$ is a transition relation defined as: $(q,x)\xrightarrow{a,b}(q',x')$ if and only if $q\xrightarrow{a,b}q'$ and $x\xrightarrow{\prod(q)}_{\mathcal{A_{\phi}}}x'$;

$\bullet$ $F_{T}=Q\times F$ is a set of accepting states of $\mathcal{A}_{T,q_0}^{\phi}$.

An infinite sequence $\sigma_{T}\in (S_{T})^{\omega}$ is said to be a run accepted by $\mathcal{A}_{T,q_0}^{\phi}$ if and only if the following hold:

(1) $\sigma_{T}[1]\in S_{T}^{0}$,

(2)  for all $i\in \mathbb{N}$, $\sigma_{T}[i]\xrightarrow{a_{i},b_{i}}\sigma_{T}[i+1]$ for some $a_{i}\in A$ and $b_{i}\in B$, and

(3) there exists $(q,x)\in F_{T}$ such that $(q,x)$ appears infinitely often in $\sigma_{T}$.
\end{definition}

It is clear that the sets $S_{T}$ and $F_{T}$ are finite. For any (finite or infinite) sequence $\alpha_{T}=(q_{1},x_{1})(q_{2},x_{2})\cdots$ over $S_{T}$, we define the projections $\Upsilon_{T}(\alpha_{T})=q_{1}q_{2}\cdots$ and $\Upsilon_{A}(\alpha_{T})=x_{1}x_{2}\cdots$.

\begin{lemma}\label{Lem:accepted run of AT}\cite{fain:2,kloe:1}
The projection $\Upsilon_{T}(\sigma_{T})$ of any accepted run $\sigma_{T}$ of $\mathcal{A}_{T,q_0}^{\phi}$ is a trajectory of $T$ satisfying $\phi$.
\end{lemma}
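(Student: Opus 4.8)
The plan is to read off the result by unwinding the definitions along the two projections of an accepted run $\sigma_{T}=(q_{1},x_{1})(q_{2},x_{2})\cdots$ of $\mathcal{A}_{T,q_0}^{\phi}$. The statement has two halves: that $\Upsilon_{T}(\sigma_{T})$ is a trajectory of $T$, and that it satisfies $\phi$ w.r.t.~$\prod$. I would treat them separately, handling the trajectory claim by a direct appeal to the transition relation and the satisfaction claim by routing it through the B$\ddot{u}$chi automaton $\mathcal{A}_{\phi}$.

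First I would establish the trajectory part. By condition~(2) in the definition of an accepted run of $\mathcal{A}_{T,q_0}^{\phi}$, for every $i\in\mathbb{N}$ there exist $a_{i}\in A$ and $b_{i}\in B$ with $(q_{i},x_{i})\xrightarrow{a_{i},b_{i}}(q_{i+1},x_{i+1})$. By the definition of the product transition relation this forces $q_{i}\xrightarrow{a_{i},b_{i}}q_{i+1}$. Collecting this over all $i$ is exactly the defining condition for $\Upsilon_{T}(\sigma_{T})=q_{1}q_{2}\cdots$ to be a trajectory of $T$ in Definition~\ref{def:system}.

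For the satisfaction part, I would show that the $\mathcal{A}_{\phi}$-projection $\Upsilon_{A}(\sigma_{T})=x_{1}x_{2}\cdots$ is a run of $\mathcal{A}_{\phi}$ accepting the word $\prod(\Upsilon_{T}(\sigma_{T}))=\prod(q_{1})\prod(q_{2})\cdots$. Three facts must be checked against Definition~\ref{Def:buchi}. From condition~(1) together with $S_{T}^{0}=\{(q_{0},x_{0})\}$ we get $x_{1}=x_{0}$, the initial state of $\mathcal{A}_{\phi}$. The same product-transition analysis as above yields the other half of each transition, namely $x_{i}\xrightarrow{\prod(q_{i})}_{\mathcal{A}_{\phi}}x_{i+1}$, so the labels read along $x_{1}x_{2}\cdots$ are precisely $\prod(q_{1}),\prod(q_{2}),\cdots$. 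Finally, condition~(3) supplies some $(q,x)\in F_{T}=Q\times F$ occurring infinitely often in $\sigma_{T}$; since $x\in F$ and $x$ therefore occurs infinitely often in $\Upsilon_{A}(\sigma_{T})$, the B$\ddot{u}$chi acceptance condition holds. Invoking Definition~\ref{Def:buchi accept} then shows $\prod(\Upsilon_{T}(\sigma_{T}))$ is accepted by $\mathcal{A}_{\phi}$, and since $\mathcal{A}_{\phi}$ accepts exactly the sequences satisfying $\phi$ I would conclude $\prod(\Upsilon_{T}(\sigma_{T}))\models\phi$, i.e.~$\Upsilon_{T}(\sigma_{T})\models_{\prod}\phi$ by Definition~\ref{def:traj satis}.

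The argument is in essence a diagram chase through the definitions, so I do not expect any genuinely hard step. The one place to stay careful is the passage from ``$x_{1}x_{2}\cdots$ is a run accepted by $\mathcal{A}_{\phi}$'' to ``the word $\prod(q_{1})\prod(q_{2})\cdots$ is accepted by $\mathcal{A}_{\phi}$'': one must confirm that this particular run reads exactly that label sequence rather than some other, which is guaranteed by the way the product transition relation couples $q\xrightarrow{a,b}q'$ with $x\xrightarrow{\prod(q)}_{\mathcal{A}_{\phi}}x'$.
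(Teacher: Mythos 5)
Your proof is correct: the paper states this lemma without proof, citing \cite{fain:2,kloe:1}, and your definition-unwinding argument (splitting the claim into the trajectory half via the first component of the product transitions, and the satisfaction half by exhibiting $\Upsilon_{A}(\sigma_{T})$ as an accepted run of $\mathcal{A}_{\phi}$ reading the word $\prod(\Upsilon_{T}(\sigma_{T}))$) is exactly the standard argument from those references. You also correctly isolate the one delicate point, namely that the product transition $(q,x)\xrightarrow{a,b}(q',x')$ labels the $\mathcal{A}_{\phi}$-step with $\prod(q)$ of the \emph{source} state, so the run $x_{1}x_{2}\cdots$ reads precisely $\prod(q_{1})\prod(q_{2})\cdots$ and Definition~\ref{Def:buchi accept} applies as needed.
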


Clearly, for any control strategy $f:Q^{+}\rightarrow A$ of $T$, the function $f_{T}:(S_{T})^{+}\rightarrow A$ defined as $f_{T}\triangleq f\circ \Upsilon_{T}$ is a control strategy of $\mathcal{A}_{T,q_0}^{\phi}$.
The outcome $Out_{\mathcal{A}_{T,q_0}^{\phi}}((q_{0},x_{0}),f_{T})$ of $f_{T}$ from $(q_{0},x_{0})$ is defined as  $Out_{\mathcal{A}_{T,q_0}^{\phi}}((q_{0},x_{0}),f_{T}) \triangleq\{\sigma_{T}\in (S_{T})^{\omega}: \sigma_{T}[1]=(q_{0},x_{0})\ \mathrm{and}\ {\forall i\in\mathbb{N} {\exists b_{i}}\in B} (\sigma_{T}[i]\xrightarrow{f_{T}(\sigma_{T}[1,i]),b_{i}} \sigma_{T}[i+1])\}\}$.
Similarly, we may define $Out_{\mathcal{A}_{T,q_0}^{\phi}}^{n}((q_{0},x_{0}),f_{T})$ ($n\in\mathbb{N}$), $Out_{\mathcal{A}_{T,q_0}^{\phi}}^{+}((q_{0},x_{0}),f_{T})$ and $Out_{\mathcal{A}_{T,q_0}^{\phi}}^{\infty}((q_{0},x_{0}),f_{T})$.
For simplicity, we often omit the subscripts in them.

\begin{lemma}\label{Lem:control strategy for AT}
Let $T=(Q,A,B,{\longrightarrow,} O,H)$ be a finite, non-blocking alternating transition system, $q_{0}\in Q$,
$\phi$ a total LTL$_{-X}(\mathbb{P})$ formula and let  $\prod$ be a valuation function.
Suppose that $\mathcal{A}_{T,q_0}^{\phi}=(S_{T},S_{T}^{0},A, B,\rightarrow, F_{T})$ is the product automaton of the pair $(T,q_0)$ and $\mathcal{A}_{\phi}$ and $f_{0}$ is a control strategy of $T$ so that  $\sigma\models\phi$ for all $\sigma\in Out(q_{0},f_{0})$.
Then, for  control strategy $f_{T}:(S_{T})^{+}\rightarrow A$  with $f_{T}\triangleq f_{0}\circ \Upsilon_{T}$, we have

 (1) $\alpha_{T}\in Out^{\infty}((q_{0},x_{0}),f_{T})$ implies $\Upsilon_{T}(\alpha_{T})\in Out^{\infty}(q_{0},f_{0})$,

 (2) for any $\sigma_{T}\in Out((q_{0},x_{0}),f_{T})$, $\sigma_{T}$ is accepted by $\mathcal{A}_{T,q_0}^{\phi}$.
\end{lemma}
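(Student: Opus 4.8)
The plan is to prove the two parts in the order stated, since the verification of (2) will invoke (1).

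\textbf{Part (1).} I would simply unfold the definitions. Suppose $\alpha_T=(q_1,x_1)(q_2,x_2)\cdots$ lies in $Out^{\infty}((q_0,x_0),f_T)$ (finite or infinite). By Definition~\ref{def:control strategy}, $\alpha_T[1]=(q_0,x_0)$ and for each $i$ in the relevant range there is some $b_i\in B$ with $(q_i,x_i)\xrightarrow{f_T(\alpha_T[1,i]),b_i}(q_{i+1},x_{i+1})$. By the definition of the product transition relation in Definition~\ref{Def:product automaton}, each such transition entails $q_i\xrightarrow{f_T(\alpha_T[1,i]),b_i}q_{i+1}$. Since the projection acts elementwise, $\Upsilon_T(\alpha_T[1,i])=\Upsilon_T(\alpha_T)[1,i]$, and because $f_T=f_0\circ\Upsilon_T$ we get $f_T(\alpha_T[1,i])=f_0(\Upsilon_T(\alpha_T)[1,i])$. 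Writing $\sigma=\Upsilon_T(\alpha_T)$, this yields $\sigma[1]=q_0$ together with $\sigma[i]\xrightarrow{f_0(\sigma[1,i]),b_i}\sigma[i+1]$ for every admissible $i$, which is exactly the membership condition $\sigma\in Out^{\infty}(q_0,f_0)$ from Definition~\ref{def:control strategy}. This part is essentially bookkeeping with the definitions.

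\textbf{Part (2).} Let $\sigma_T=(q_1,x_1)(q_2,x_2)\cdots\in Out((q_0,x_0),f_T)$. I would verify the three acceptance conditions of Definition~\ref{Def:product automaton} in turn. Conditions (1) and (2)—namely $\sigma_T[1]\in S_T^{0}$ and the existence of labels witnessing each transition—are immediate from the definition of $Out$, taking $a_i=f_T(\sigma_T[1,i])$. The substantial work is condition (3): that some accepting state of $F_T=Q\times F$ occurs infinitely often in $\sigma_T$. By Part (1), $\sigma\triangleq\Upsilon_T(\sigma_T)\in Out(q_0,f_0)$, so by hypothesis $\sigma\models\phi$, i.e.\ $\prod(\sigma)=\prod(q_1)\prod(q_2)\cdots$ satisfies $\phi$. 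Since $\mathcal{A}_{\phi}$ accepts exactly the words satisfying $\phi$, the word $\prod(\sigma)$ is accepted by $\mathcal{A}_{\phi}$, so by Definition~\ref{Def:buchi accept} there is an accepting run $y_1y_2\cdots$ of $\mathcal{A}_{\phi}$ with $y_1=x_0$, $y_i\xrightarrow{\prod(q_i)}_{\mathcal{A}_{\phi}}y_{i+1}$, and some $x^{*}\in F$ occurring infinitely often.

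The main obstacle is identifying the second coordinate of $\sigma_T$ with this run, and this is exactly where totality of $\phi$ is essential. The definition of the product transition relation forces $x_i\xrightarrow{\prod(q_i)}_{\mathcal{A}_{\phi}}x_{i+1}$; since $\mathcal{A}_{\phi}$ is total we have $x_1=y_1=x_0$, and every successor under a fixed input letter is unique. An easy induction then gives $x_i=y_i$ for all $i$, so the second projection $\Upsilon_A(\sigma_T)$ coincides with the accepting run $y_1y_2\cdots$. Finally I would lift ``$x^{*}\in F$ appears infinitely often among the second coordinates'' to ``some pair in $F_T$ appears infinitely often in $\sigma_T$'' using finiteness of $Q$: infinitely many indices $i$ satisfy $x_i=x^{*}$, and since $Q$ is finite, the pigeonhole principle yields a fixed $q^{*}\in Q$ with $(q_i,x_i)=(q^{*},x^{*})$ for infinitely many $i$. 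As $x^{*}\in F$, the pair $(q^{*},x^{*})\in Q\times F=F_T$ occurs infinitely often in $\sigma_T$, establishing condition (3) and hence that $\sigma_T$ is accepted by $\mathcal{A}_{T,q_0}^{\phi}$.
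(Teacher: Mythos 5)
Your proposal is correct and follows essentially the same route as the paper's proof: part (1) by unfolding the definitions of the product transitions and of $f_T=f_0\circ\Upsilon_T$, and part (2) by using totality of $\mathcal{A}_{\phi}$ to identify $\Upsilon_A(\sigma_T)$ with the accepting run of $\prod(\Upsilon_T(\sigma_T))$ and then finiteness of $Q$ to pin down a pair in $F_T$ recurring infinitely often. You merely spell out the induction identifying the runs and the pigeonhole step, which the paper compresses.
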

\begin{proof}
Let $f_{T}=f_{0}\circ \Upsilon_{T}$. Then (1) follows from $f_{T}=f_{0}\circ\Upsilon_{T}$, Definition~\ref{Def:product automaton} and the definition of outcomes. Next, we prove (2).
Let $\sigma_{T}\in Out((q_{0},x_{0}),f_{T})$. Then by Definition~\ref{Def:product automaton} and the definition of $Out((q_{0},x_{0}),f_{T})$, it is enough to show that there exists $(q,x)\in F_{T}$ such that $(q,x)$ appears infinitely often in $\sigma_{T}$.
By (1) and $\sigma_{T}\in Out((q_{0},x_{0}),f_{T})$, we obtain $\Upsilon_{T}(\sigma_{T})\in Out(q_{0},f_{0})$. Then since $\sigma\models\phi\textrm{ for all }\sigma\in Out(q_{0},f_{0})$, $\prod(\Upsilon_{T}(\sigma_{T}))$ is accepted by $\mathcal{A}_{\phi}$. Moreover, it follows from Definition \ref{Def:product automaton} that
\begin{equation}\label{Eq:Lem property}
\Upsilon_{A}(\sigma_{T})[i]\xrightarrow{\prod(\Upsilon_{T}(\sigma_{T}))[i]}_{\mathcal{A}_{\phi}}\Upsilon_{A}(\sigma_{T})[i+1] \textrm{ for all } i\in \mathbb{N}.
\end{equation}
Further, since $\mathcal{A}_{\phi}$ is total, $\Upsilon_{A}(\sigma_{T})$ is a unique sequence satisfying (\ref{Eq:Lem property}). Then, since $\prod(\Upsilon_{T}(\sigma_{T}))$ is accepted by $\mathcal{A}_{\phi}$, $\Upsilon_{A}(\sigma_{T})$ is accepted by $\mathcal{A}_{\phi}$.
Thus it follows that there exists $x\in F$ such that $x$ appears infinitely often in $\Upsilon_{A}(\sigma_{T})$.
So, since $T$ is finite, there exists a state $q$ of $T$ such that $(q,x)$ appears infinitely often in $\sigma_{T}$.
\qquad
\end{proof}
\begin{figure}[t]
\begin{center}
\centerline{\includegraphics[scale=0.7]{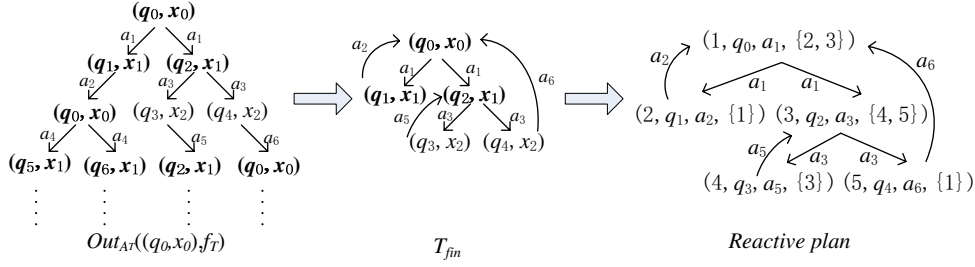}}
\end{center}
\caption{Construction of reactive plan}\label{Fig:illustration}
\end{figure}

In the following, we take two steps to construct the desired reactive plan.
In the first step, we will construct a finite transition transition $T\!_{f\!i\!n}$ based on $Out^{\infty}((q_{0},x_{0}),f_{T})$ such that all trajectories of $T\!_{f\!i\!n}$ are runs accepted by $\mathcal{A}_{T,q_0}^{\phi}$.
In the second step, we may easily obtain a reactive plan from $T\!_{f\!i\!n}$ so that the trajectories generated by this reactive plan are exactly the $\Upsilon_{T}-$projections of trajectories of $T_{f\!i\!n}$.
Then by Lemma~\ref{Lem:accepted run of AT}, this reactive plan satisfies $\phi$.
Fig~\ref{Fig:illustration} illustrates these two steps.
To construct the finite transition transition $T_{f\!i\!n}$, we introduce the following function.

 \begin{definition}\label{Def:number of first cycle}
Let $T=(Q,A,B,{\longrightarrow,} O,H)$ be a finite, non-blocking alternating transition system, $q_{0}\in Q$,
$\phi$ a total LTL$_{-X}(\mathbb{P})$ formula  and let $\prod$ be a valuation function.
Suppose that $\mathcal{A}_{T,q_0}^{\phi}=(S_{T},S_{T}^{0},A, B,\rightarrow, F_{T})$ is the product automaton of the pair $(T,q_0)$ and $\mathcal{A}_{\phi}$, $f_{0}$ is a control strategy of $T$ and
$f_{T}= f_{0}\circ \Upsilon_{T}$.
The function  $ReN: Out^{\infty}((q_{0},x_{0}),f_{T})\rightarrow \mathbb{N}\cup \{\infty\}$ is defined as for any $\alpha_{T}\in Out^{\infty}((q_{0},x_{0}),f_{T})$,
 \begin{equation*}
 ReN(\alpha_{T})=\inf\{n:\textrm{there exist } i< n \textrm{ such that } \alpha_{T}[i]=\alpha_{T}[n]\in F_{T}\}.
 \end{equation*}
 \end{definition}
Here, inf$\emptyset=\infty$.
Intuitively, $ReN(\alpha_{T})<\infty$ means that there exists an accepting state in $F_{T}$ occurring in $\alpha_{T}$ at least two times.
Given a run $\sigma_{T}$ accepted by $\mathcal{A}_{T,q_0}^{\phi}$, by Definition~\ref{Def:product automaton} and~\ref{Def:number of first cycle}, we have $\sigma_{T}[j]=\sigma_{T}[n]\in F_{T}$ for some $j<n$ and then $ReN(\sigma_{T})=n<\infty$.
It is easy to check that $\sigma_{T}[1,j]\circ(\sigma_{T}[j+1,n])^{\omega}$ is also a run accepted by $\mathcal{A}_{T,q_0}^{\phi}$, where $(\sigma_{T}[j+1,n])^{\omega}\triangleq \sigma_{T}[j+1,n]\circ  \sigma_{T}[j+1,n]\circ\cdots$.
Inspired by this fact, we will construct a finite transition transition $T_{f\!i\!n}$ based on $Out^{\infty}((q_{0},x_{0}),f_{T})$ such that the trajectories of $T_{f\!i\!n}$ are runs accepted by $\mathcal{A}_{T,q_0}^{\phi}$.

\begin{definition}\label{def:graph}
Let $T=(Q,A,B,{\longrightarrow,} O,H)$ be a finite, non-blocking alternating transition system, $q_{0}\in Q$,
$\phi$ a total LTL$_{-X}(\mathbb{P})$ formula  and let $\prod$ be a valuation function.
Suppose that $\mathcal{A}_{T,q_0}^{\phi}=(S_{T},S_{T}^{0},A, B,\rightarrow, F_{T})$ is the product automaton of the pair $(T,q_0)$ and $\mathcal{A}_{\phi}$, $f_{0}$ is a control strategy of $T$ and
$f_{T}= f_{0}\circ \Upsilon_{T}$.
The accepting transition system w.r.t. $\mathcal{A}_{T,q_0}^{\phi}$ and $f_{T}$ is defined as
\begin{center}
$T_{fin}(\mathcal{A}_{T,q_0}^{\phi},f_{T})=<S_{f},A, \rightarrow_{f}, lab>$,
\end{center}
where

$\bullet$ $S_{f}=\{s_{T}\in Out^{+}((q_{0},x_{0}),f_{T}): ReN(s_{T})=\infty\}$. That is, the set $S_f$ contains all $s_{T}\in Out^{+}((q_{0},x_{0}),f_{T})$ in which each accepting state occurs at most one time;

$\bullet$ $\rightarrow_{f} \subseteq S_{f}\times A \times  S_{f}$ is a transition relation defined as: $s_{T}\xrightarrow{a}_{f} s_{T}'$ if and only if $a= f_{T}(s_{T})$ and for some $(q,x)\in S_{T}$ and $b\in B$, $s_{T}[end]\xrightarrow{a,b}(q,x)$ and one of the following holds:

(1) $s_{T}\circ(q,x)=s_{T}'$, or

(2) $ReN(s_{T}\circ(q,x))<\infty$, $s_{T}'\prec s_{T}\circ (q,x)$ and $s_{T}'[end]=(q,x)$ \footnote{$s_{T}'\prec s_{T}\circ (q,x)$ means that $s_{T}'$ is a proper prefix of $s_{T}\circ (q,x)$, i.e., $s_{T}\circ (q,x)=s_{T}'s_{T}''$ for some $s_{T}''\in (S_{T})^{+}$.};

$\bullet$ $lab:S_{f}\rightarrow S_{T}$ is a label function defined as: for any $s_{T}\in S_{f}$, $lab(s_{T})=s_{T}[end]$.

An infinite sequence $\sigma_{T}\in (S_{T})^{\omega}$ is said to be a trajectory of $T_{f\!i\!n}(\mathcal{A}_{T,q_0}^{\phi},f_{T})$ if and only if there exists an infinite sequence $s_{T}^{1}s_{T}^{2}\cdots$ over $S_{f}$ such that $s_{T}^{1}=(q_{0},x_{0})$ and for any $i\in\mathbb{N}$, $lab(s_{T}^{i})=\sigma_{T}[i]$ and $s_{T}^{i}\xrightarrow{a_{i}}_{f} s_{T}^{i+1}$.
\end{definition}

The left and middle figures in Fig~\ref{Fig:illustration} illustrate the above construction.
In this figure, the nodes labeled by accepting states of $\mathcal{A}_{T,q_0}^{\phi}$ are identified in boldface type.
In the left figure in Fig~\ref{Fig:illustration}, consider the trajectory $\sigma_T=(q_0,x_0)(q_1,x_1)(q_0,x_0)(q_5,x_1)\cdots$.
Clearly, none of accepting states occurs in $\sigma_T[1]$ or $\sigma_T[1,2]$ two times, while the accepting state $(q_0,x_0)$ occurs in $\sigma_T[1,3]$ two times.
Thus by Definition~\ref{Def:number of first cycle} and~\ref{def:graph}, we have $\sigma_T[1],\sigma_T[1,2]\in S_f$ and $\sigma_T[1,3]\not\in S_f$.
Then
$\sigma_T[1]$ and $\sigma_T[1,2]$ are labeled by $(q_0,x_0)$ and $(q_1,x_1)$, respectively.
Furthermore, by the definition of $\rightarrow_f$, one may check that $\sigma_T[1]\xrightarrow{a_1}_{f}\sigma_T[1,2]$ and $\sigma_T[1,2]\xrightarrow{a_2}_{f}\sigma_T[1]$.

The following result reveals that the state set of $T_{fin}(\mathcal{A}_{T,q_0}^{\phi},f_{T})$ is finite and its trajectories are runs accepted by $\mathcal{A}_{T,q_0}^{\phi}$.

\begin{lemma}\label{Lem:Graph state}
Let $T=(Q,A,B,{\longrightarrow,} O,H)$ be a finite, non-blocking alternating transition system, $q_{0}\in Q$,
$\phi$ a total LTL$_{-X}(\mathbb{P})$ formula and let  $\prod$ be a valuation function.
Suppose that $\mathcal{A}_{T,q_0}^{\phi}$ is the product automaton of the pair $(T,q_0)$ and $\mathcal{A}_{\phi}$ and $f_{0}$ is a control strategy of $T$ so that $\sigma\models\phi$ for all $\sigma\in Out(q_0,f_0)$. Let $f_{T}= f_{0}\circ \Upsilon_{T}$ and let $T_{fin}(\mathcal{A}_{T,q_0}^{\phi},f_{T})=<S_{f}, A, \rightarrow_{f}, lab>$ be the accepting transition system w.r.t. $\mathcal{A}_{T,q_0}^{\phi}$ and $f_{T}$. Then the following conclusions hold:

(1) The set $S_{f}$ is finite and non-empty.

(2) The trajectory $\sigma_{T}$ of $T_{f\!i\!n}(\mathcal{A}_{T,q_0}^{\phi},f_{T})$  is a run accepted by $\mathcal{A}_{T,q_0}^{\phi}$.

(3) For any $s_{T}\in S_{f}$ and for any state $q$ of $T$, if $\Upsilon_{T}(s_{T}[end])\xrightarrow{f_{T}(s_{T}),b}q$ for some $b\in B$, then there exists  $s_{T}'\in S_{f}$ such that $s_{T}\xrightarrow{f_{T}(s_{T})}_{f}s_{T}'$ and ${\Upsilon_{T}(s_{T}')[end]=q}$.
\end{lemma}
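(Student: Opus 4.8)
The plan is to dispatch the three conclusions in order, exploiting throughout that $S_{f}$ is prefix-closed: if $s_{T}\in S_{f}$ then every prefix of $s_{T}$ again lies in $Out^{+}((q_{0},x_{0}),f_{T})$ and, by Definition~\ref{Def:number of first cycle}, inherits $ReN=\infty$. Hence $S_{f}$ is the node set of a tree rooted at the one-element sequence $(q_{0},x_{0})$, and, since $S_{T}$ is finite, this tree is finitely branching. Non-emptiness in (1) is then immediate, because the single-element sequence $(q_{0},x_{0})$ vacuously satisfies $ReN=\infty$ and so belongs to $S_{f}$.

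For the finiteness in (1) I would argue by contradiction via K\"onig's lemma. Were $S_{f}$ infinite, the finitely branching tree $S_{f}$ would contain an infinite branch, that is, an infinite outcome $\sigma_{T}\in Out((q_{0},x_{0}),f_{T})$ all of whose prefixes have $ReN=\infty$; equivalently, no accepting state of $F_{T}$ is ever repeated along $\sigma_{T}$. But Lemma~\ref{Lem:control strategy for AT}(2) guarantees that $\sigma_{T}$ is accepted by $\mathcal{A}_{T,q_0}^{\phi}$, so some state of $F_{T}$ occurs infinitely often in $\sigma_{T}$ and in particular is repeated, a contradiction. Thus $S_{f}$ is finite.

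For (2) the initial-state and one-step transition clauses of Definition~\ref{Def:product automaton} fall out of unwinding the definitions of $lab$ and $\rightarrow_{f}$: each edge $s_{T}^{i}\xrightarrow{a_{i}}_{f}s_{T}^{i+1}$ is witnessed by a product transition $s_{T}^{i}[end]\xrightarrow{a_{i},b}(q,x)$ with $(q,x)=s_{T}^{i+1}[end]=\sigma_{T}[i+1]$, yielding $\sigma_{T}[i]\xrightarrow{a_{i},b}\sigma_{T}[i+1]$. The substance of (2) is the B\"uchi condition. Here I would note that a clause-(1) edge strictly increases the length $|s_{T}^{i}|$, whereas a clause-(2) edge can fire only when appending $(q,x)$ repeats an accepting state, which (since $s_{T}^{i}\in S_{f}$ already has $ReN=\infty$) forces the repetition to involve the new last element, so $(q,x)\in F_{T}$ and hence $\sigma_{T}[i+1]\in F_{T}$. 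Because every $s_{T}^{i}$ lies in the finite set $S_{f}$, the lengths $|s_{T}^{i}|$ are bounded, so clause-(1) edges cannot be taken forever; consequently clause-(2) edges occur infinitely often, each depositing a state of $F_{T}$ into $\sigma_{T}$. Finiteness of $F_{T}$ then yields, by pigeonhole, a single accepting state recurring infinitely often, so $\sigma_{T}$ is accepted.

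For (3), writing $s_{T}[end]=(q_{end},x_{end})$ and assuming $q_{end}\xrightarrow{f_{T}(s_{T}),b}q$ in $T$, I would first use totality of $\mathcal{A}_{\phi}$ to obtain the unique $x'$ with $x_{end}\xrightarrow{\prod(q_{end})}_{\mathcal{A}_{\phi}}x'$, so that $(q_{end},x_{end})\xrightarrow{f_{T}(s_{T}),b}(q,x')$ in $\mathcal{A}_{T,q_0}^{\phi}$ and $s_{T}\circ(q,x')\in Out^{+}((q_{0},x_{0}),f_{T})$. Then I would split on $ReN(s_{T}\circ(q,x'))$: if it equals $\infty$, take $s_{T}'=s_{T}\circ(q,x')\in S_{f}$ and use clause (1) of $\rightarrow_{f}$; if it is finite, then as above $(q,x')\in F_{T}$ already occurs in $s_{T}$, so take $s_{T}'$ to be the prefix of $s_{T}$ ending at the first occurrence of $(q,x')$, which lies in $S_{f}$ by prefix-closure and fulfils clause (2). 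In either case $\Upsilon_{T}(s_{T}')[end]=q$, as required. The step I expect to be the main obstacle is the finiteness in (1): it is not apparent a priori, since a member of $S_{f}$ may repeat \emph{non}-accepting states arbitrarily often, and it is exactly the acceptance property of Lemma~\ref{Lem:control strategy for AT}(2), channelled through K\"onig's lemma, that excludes an infinite branch. This same finiteness is precisely what powers the acceptance argument in (2), so establishing (1) is the linchpin of the whole lemma.
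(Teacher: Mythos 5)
Your proof is correct, and although it establishes the same three conclusions it reorganizes the argument in ways worth recording. For (1), the paper first proves a uniform bound (Lemma~\ref{Lem:sigma finite length}): there is a single $n\in\mathbb{N}$ with $ReN(\sigma_{T})\leq n$ for every $\sigma_{T}\in Out((q_{0},x_{0}),f_{T})$, obtained by an explicit diagonal construction of a limit outcome from outcomes with unbounded $ReN$; it then concludes $S_{f}\subseteq\bigcup_{i<n}Out^{i}((q_{0},x_{0}),f_{T})$. Your K\"onig's-lemma argument on the prefix tree of $S_{f}$ is the same compactness idea --- the paper's diagonalization is exactly a hand-rolled instance of K\"onig's lemma --- but packaged more cleanly, dispensing with the uniform bound; both versions derive the contradiction from Lemma~\ref{Lem:control strategy for AT}(2), and your supporting observations (prefix-closure of $S_{f}$, finite branching, and that an infinite branch lies in $Out((q_{0},x_{0}),f_{T})$ with no repeated accepting state) all check out. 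For (2) you genuinely diverge: the paper shows any trajectory of $T_{fin}(\mathcal{A}_{T,q_0}^{\phi},f_{T})$ lies in $Out((q_{0},x_{0}),f_{T})$ and then simply cites Lemma~\ref{Lem:control strategy for AT}(2) for acceptance, whereas you prove the B\"uchi condition directly from the edge structure: clause-(1) edges strictly increase length, which is bounded by the finiteness of $S_{f}$, so clause-(2) edges recur infinitely often, and each clause-(2) edge --- because $ReN(s_{T}^{i})=\infty$ forces the repetition witnessing $ReN(s_{T}^{i}\circ(q,x))<\infty$ to involve the appended state --- deposits a state of $F_{T}$ into $\sigma_{T}$, after which pigeonhole on the finite set $F_{T}$ finishes. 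This is self-contained and explains \emph{why} the construction works, at the small cost of making (2) depend on (1), a dependence the paper's proof avoids (the paper's route also records the reusable intermediate fact that every trajectory of $T_{fin}$ is itself an outcome of $f_{T}$). For (3), you bypass the lifting lemma (Lemma~\ref{Lem:the relation between sigma and sigmaT}(2), which the paper invokes by projecting to $Out^{+}(q_{0},f_{0})$ and lifting back) and instead use totality of $\mathcal{A}_{\phi}$ directly to produce the unique $x'$ with $(q_{end},x_{end})\xrightarrow{f_{T}(s_{T}),b}(q,x')$, so that $s_{T}\circ(q,x')\in Out^{+}((q_{0},x_{0}),f_{T})$; the ensuing case split on $ReN(s_{T}\circ(q,x'))$ mirrors the paper's, with your prefix of $s_{T}$ ending at the occurrence of $(q,x')$ playing the role of the paper's $s_{T}''$ (its membership in $S_{f}$ follows from prefix-closure, just as the paper obtains $ReN(s_{T}'')=\infty$ from Lemma~\ref{Lem:the relation between sigma and sigmaT}(3)). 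In short: same skeleton for (1) and (3) with leaner bookkeeping, and a genuinely different, more structural proof of (2).
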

\begin{proof}
See Appendix A.
\qquad
\end{proof}

Now we may generate the desired reactive plan from $T_{f\!i\!n}(\mathcal{A}_{T,q_0}^{\phi},f_{T})$.

\begin{definition}\label{def:plan}
Let $T$ be a finite, non-blocking alternating transition system, $q_{0}$ a state $T$,
$\phi$ a total LTL$_{-X}(\mathbb{P})$ formula and let  $\prod$ be a valuation function.
Suppose that $\mathcal{A}_{T,q_0}^{\phi}$ is the product automaton of the pair $(T,q_0)$ and $\mathcal{A}_{\phi}$ and $f_{0}$ is a control strategy of $T$ so that $\sigma\models\phi$ for all $\sigma\in Out(q_0,f_0)$. Let $f_{T}= f_{0}\circ \Upsilon_{T}$ and let $T_{fin}(\mathcal{A}_{T,q_0}^{\phi},f_{T})=<S_{f}, A, \rightarrow_{f}, lab>$ be the accepting transition system w.r.t. $\mathcal{A}_{T,q_0}^{\phi}$ and $f_{T}$ with $S_{f}=\{s^{1}_{T},s^{2}_{T},\cdots, s^{m}_{T}\}$ and $s^{1}_{T}=(q_{0},x_{0})$.
Then the set $RP(T_{fin})$ consists of all SCRs $(i,\Upsilon_{T}(s^{i}_{T}[end]),a_{i},N_{i})$ such that

(1) $1\leq i\leq m$,

(2) $a_{i}=f_{T}(s^{i}_{T})$, and

(3) $N_{i}=\{j\in\mathbb{N}: s^{i}_{T}\xrightarrow{a_{i}}_{f}s^{j}_{T}\}$.
\end{definition}

The right in Fig~\ref{Fig:illustration} illustrates the above construction w.r.t. $T_{f\!i\!n}$ (i.e., the middle one in Fig~\ref{Fig:illustration}).
In this figure, each plan state corresponds to a unique state of $T_{f\!i\!n}$ and the action to be executed in each plan state is set to be the one in the corresponding state of  $T_{f\!i\!n}$.
According to (3) in Lemma~\ref{Lem:Graph state} and Definition~\ref{Def:reactive plan}, $RP(T_{fin})$ defined above is a reactive plan.
In the following, we demonstrate that this reactive plan satisfies $\phi$.

\begin{theorem}\label{Th:plan satisfy}
Let $T=(Q,A,B,{\longrightarrow,} O,H)$ be a finite, non-blocking alternating transition system, $q_{0}\in Q$,
$\phi$ a total LTL$_{-X}(\mathbb{P})$ formula and let  $\prod:Q\rightarrow 2^{\mathbb{P}}$ be a valuation function.
Suppose that $\mathcal{A}_{T,q_0}^{\phi}$ is the product automaton of the pair $(T,q_0)$ and $\mathcal{A}_{\phi}$ and $f_{0}$ is a control strategy of $T$ so that $\sigma\models\phi$ for all $\sigma\in Out(q_0,f_0)$. Let $f_{T}= f_{0}\circ \Upsilon_{T}$, $T_{fin}(\mathcal{A}_{T,q_0}^{\phi},f_{T})$ the accepting transition system w.r.t. $\mathcal{A}_{T,q_0}^{\phi}$ and $f_{T}$ and let $RP(T_{fin})=\{(i,\Upsilon_{T}(s^{i}_{T}[end]),a_{i},N_{i}): 1\leq i\leq m\}$ be the reactive plan defined by Definition~\ref{def:plan}.
 Then for any trajectory $\sigma$ generated by the reactive plan $RP(T_{fin})$, we have $\sigma\models\phi$.
\end{theorem}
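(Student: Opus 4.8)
The plan is to reduce the statement to the two facts already established about $T_{fin}$ and the product automaton, namely Lemma~\ref{Lem:Graph state}(2) and Lemma~\ref{Lem:accepted run of AT}. The core observation is that the reactive plan $RP(T_{fin})$ is, by construction, merely a relabelling of the accepting transition system $T_{fin}(\mathcal{A}_{T,q_0}^{\phi},f_{T})$: its plan state $i$ corresponds exactly to the element $s_{T}^{i}\in S_{f}$, the action executed in plan state $i$ is $f_{T}(s_{T}^{i})$, and the successor set $N_{i}$ records precisely the $\rightarrow_{f}$-successors of $s_{T}^{i}$. Hence every trajectory generated by $RP(T_{fin})$ should lift to a trajectory of $T_{fin}$, and the latter are accepted runs whose $\Upsilon_{T}$-projections satisfy $\phi$.

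First I would take an arbitrary trajectory $\sigma$ generated by $RP(T_{fin})$ and unpack it via Definition~\ref{def:traj of plan}: there is a sequence $\sigma_{N}=i_{1}i_{2}\cdots$ over $\{1,\dots,m\}$ with $i_{1}=1$, $i_{j+1}\in N_{i_{j}}$ for all $j$, and $\sigma[j]=\Upsilon_{T}(s_{T}^{i_{j}}[end])$. Using clause~(3) of Definition~\ref{def:plan}, the condition $i_{j+1}\in N_{i_{j}}$ unwinds to $s_{T}^{i_{j}}\xrightarrow{a_{i_j}}_{f}s_{T}^{i_{j+1}}$ with $a_{i_j}=f_{T}(s_{T}^{i_{j}})$. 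Thus the sequence $s_{T}^{i_{1}}s_{T}^{i_{2}}\cdots$ is an infinite $\rightarrow_{f}$-path in $S_{f}$ starting at $s_{T}^{i_{1}}=s_{T}^{1}=(q_{0},x_{0})$.

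Next I would set $\sigma_{T}[j]=lab(s_{T}^{i_{j}})=s_{T}^{i_{j}}[end]$ for each $j$. By Definition~\ref{def:graph}, the path above witnesses that $\sigma_{T}$ is a trajectory of $T_{fin}(\mathcal{A}_{T,q_0}^{\phi},f_{T})$. Lemma~\ref{Lem:Graph state}(2) then guarantees that $\sigma_{T}$ is a run accepted by $\mathcal{A}_{T,q_0}^{\phi}$, and Lemma~\ref{Lem:accepted run of AT} gives $\Upsilon_{T}(\sigma_{T})\models\phi$. Finally I would verify that this projection is exactly $\sigma$: for each $j$ we have $\Upsilon_{T}(\sigma_{T})[j]=\Upsilon_{T}(s_{T}^{i_{j}}[end])=\sigma[j]$, the last equality being the defining property of $\sigma_{N}$. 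Therefore $\sigma=\Upsilon_{T}(\sigma_{T})\models\phi$, as required.

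The genuine mathematical content here is light; the main obstacle is bookkeeping. One must keep straight the two layers of the construction, plan states versus elements of $S_{f}$, the latter being themselves finite sequences over $S_{T}$, so that the identification ``plan state $i\leftrightarrow s_{T}^{i}$'' is applied consistently throughout. In particular one must confirm that the path extracted from $\sigma_{N}$ genuinely satisfies the initial condition $s_{T}^{1}=(q_{0},x_{0})$ and the step condition $s_{T}^{i_{j}}\xrightarrow{a_{i_j}}_{f}s_{T}^{i_{j+1}}$ demanded by Definition~\ref{def:graph}, rather than merely resembling them. Once this correspondence is made precise, the proof is a direct chain through Lemma~\ref{Lem:Graph state}(2) and Lemma~\ref{Lem:accepted run of AT}.
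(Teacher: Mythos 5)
Your proposal is correct and follows essentially the same route as the paper's proof: both unpack the trajectory via Definition~\ref{def:traj of plan}, translate $i_{j+1}\in N_{i_{j}}$ into $s_{T}^{i_{j}}\xrightarrow{a_{i_{j}}}_{f}s_{T}^{i_{j+1}}$ using Definitions~\ref{def:graph} and~\ref{def:plan}, conclude that $\sigma_{T}=s_{T}^{i_{1}}[end]\,s_{T}^{i_{2}}[end]\cdots$ is a trajectory of $T_{fin}$ with $\Upsilon_{T}(\sigma_{T})=\sigma$, and then finish by Lemma~\ref{Lem:Graph state}(2) and Lemma~\ref{Lem:accepted run of AT}. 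The only difference is expository: you spell out the plan-state-to-$S_{f}$ correspondence slightly more explicitly than the paper does, which is harmless.
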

\begin{proof}
Let $\sigma$ be a trajectory generated by the reactive plan $RP(T_{fin})$.
So by Definition~\ref{def:traj of plan}, there exists an infinite sequence
$i_{1}i_{2}\cdots $ of plan states in $RP(T_{fin})$ such that
\begin{equation}\label{Eq:Lem plan satisfy}
i_{1}=1,\sigma[j]=\Upsilon_{T}(s^{i_{j}}_{T}[end])\textrm{ and } i_{j+1}\in N_{i_{j}} \textrm{ for all } j\in\mathbb{N}.
\end{equation}
We set $\sigma_{T}=s^{i_{1}}_{T}[end]s^{i_{2}}_{T}[end]\cdots$.
Clearly, $\Upsilon_{T}(\sigma_{T})=\sigma$.
 Therefore, by Lemma~\ref{Lem:accepted run of AT} and~\ref{Lem:Graph state}, in order to prove $\sigma\models\phi$, it suffices to show that $\sigma_{T}$ is a trajectory of $T_{fin}(\mathcal{A}_{T,q_0}^{\phi},f_{T})$.

 It follows from $i_{1}=1$ and Definition~\ref{def:plan} that $s^{i_{1}}_{T}=(q_{0},x_{0})$.
 Let $j\in\mathbb{N}$. By~(\ref{Eq:Lem plan satisfy}), we have $i_{j+1}\in N_{i_{j}}$.
 Further, it follows from Definition~\ref{def:graph} and~\ref{def:plan} that
 $s^{i_{j}}_{T}\xrightarrow{a_{i_{j}}}_{f}s^{i_{j+1}}_{T}$.
 Thus by Definition~\ref{def:graph}, $\sigma_{T}$ is a trajectory of $T_{fin}(\mathcal{A}_{T,q_0}^{\phi},f_{T})$, as desired. \qquad
\end{proof}

Now we arrive at the main result of this section.

\begin{theorem}\label{th:strategy exists}
For any finite, non-blocking alternating transition system $T=(Q,A,B,{\longrightarrow,} O,H)$, LTL$_{-X}(\mathbb{P})$ formula $\phi$ and valuation function $\prod$, if  $\phi$ is total and there exists a state $q$ of $T$ and a control strategy $f:Q^{+}\rightarrow A$ such that $\sigma\models\phi$ for all $\sigma\in Out(q,f)$, then
the control strategy algorithm can find an initial state $q'$ and a control strategy $f':Q^{+}\rightarrow A$ so that $\sigma\models\phi$ for all $\sigma\in Out(q',f')$.
\end{theorem}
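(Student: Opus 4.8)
The plan is to reduce the statement to Corollary~\ref{co:algorithm}: by that corollary it is enough to exhibit a reactive plan of $T$ satisfying $\phi$, and the whole apparatus built up in this section (the product automaton $\mathcal{A}_{T,q_0}^{\phi}$, the accepting transition system $T_{fin}$, and the derived reactive plan $RP(T_{fin})$) is designed precisely to manufacture such a plan from a working control strategy. So my first step is to instantiate that apparatus with the data granted by the hypothesis: take $q_0=q$ and $f_0=f$, where $(q,f)$ is the state and control strategy for which $\sigma\models\phi$ for all $\sigma\in Out(q,f)$. Since $\phi$ is total, Definition~\ref{def:total} supplies a total B$\ddot{u}$chi automaton $\mathcal{A}_{\phi}$, so the product automaton $\mathcal{A}_{T,q_0}^{\phi}$ of Definition~\ref{Def:product automaton} is well-defined; setting $f_T=f_0\circ\Upsilon_T$ I may then form the accepting transition system $T_{fin}(\mathcal{A}_{T,q_0}^{\phi},f_T)$ of Definition~\ref{def:graph} and, from it, the reactive plan $RP(T_{fin})$ of Definition~\ref{def:plan}. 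By Lemma~\ref{Lem:Graph state}(3) together with Definition~\ref{Def:reactive plan}, $RP(T_{fin})$ is indeed a reactive plan of $T$.

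Next I would check that $RP(T_{fin})$ satisfies $\phi$ in the sense of Definition~\ref{def:satis plan}, which carries two clauses. The first clause, that every trajectory generated by $RP(T_{fin})$ satisfies $\phi$, is exactly the conclusion of Theorem~\ref{Th:plan satisfy}, whose hypotheses coincide with what was assembled above (in particular the assumption $\sigma\models\phi$ for all $\sigma\in Out(q_0,f_0)$). So that clause costs nothing beyond a citation. The remaining obligation is the second clause, namely that $RP(T_{fin})$ generates \emph{at least one} trajectory; by Lemma~\ref{Lem:reactive cycle} this amounts to producing a reachable cycle.

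To do that I would argue at the level of $T_{fin}$ that $\rightarrow_f$ has no dead ends. By Lemma~\ref{Lem:Graph state}(1) the set $S_f$ is finite and non-empty and contains $s_T^1=(q_0,x_0)$. Fix any $s_T\in S_f$: because $T$ is non-blocking, for the control label $a=f_T(s_T)$ and any disturbance label $b$ there is some state $q$ of $T$ with $\Upsilon_T(s_T[end])\xrightarrow{a,b}q$, and then Lemma~\ref{Lem:Graph state}(3) yields an $s_T'\in S_f$ with $s_T\xrightarrow{f_T(s_T)}_f s_T'$. Hence every state of $S_f$ has an $\rightarrow_f$-successor, so starting from $(q_0,x_0)$ one can follow $\rightarrow_f$ indefinitely and obtain an infinite walk in the finite set $S_f$. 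Translating this walk through Definition~\ref{def:plan} produces an infinite sequence of plan states of the form required by Definition~\ref{def:traj of plan}, i.e.\ a trajectory generated by $RP(T_{fin})$; equivalently, pigeonhole on the finite walk gives a repeated state, hence a reachable cycle, and Lemma~\ref{Lem:reactive cycle} applies.

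Both clauses of Definition~\ref{def:satis plan} then hold, so $RP(T_{fin})$ is a reactive plan of $T$ satisfying $\phi$, and Corollary~\ref{co:algorithm} closes the argument by producing the initial state $q'$ and control strategy $f'$ with $\sigma\models\phi$ for all $\sigma\in Out(q',f')$. The only genuinely delicate point I anticipate is the second satisfaction clause: one must ensure that $RP(T_{fin})$ really generates a trajectory rather than being only vacuously correct, and the essential input there is that non-blockingness of $T$ combined with Lemma~\ref{Lem:Graph state}(3) rules out dead ends in $\rightarrow_f$. Everything else is a matter of invoking the constructions and lemmas already established, with totality of $\phi$ entering (through Lemma~\ref{Lem:control strategy for AT}) only to guarantee that outcomes of $f_T$ are accepted runs, which is what makes $S_f$ non-empty in Lemma~\ref{Lem:Graph state}(1).
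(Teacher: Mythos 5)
Your proposal is correct and follows essentially the same route as the paper: take $q_0=q$ and $f_0=f$, build the product automaton $\mathcal{A}_{T,q_0}^{\phi}$, the accepting transition system $T_{fin}$, and the reactive plan $RP(T_{fin})$, invoke Theorem~\ref{Th:plan satisfy} for satisfaction of all generated trajectories, and close with Corollary~\ref{co:algorithm}. You are in fact slightly more careful than the paper's terse proof, which passes over the second clause of Definition~\ref{def:satis plan} (existence of at least one generated trajectory); your argument that non-blockingness of $T$ together with Lemma~\ref{Lem:Graph state}(1) and (3) rules out dead ends in $\rightarrow_f$, yielding an infinite walk from $(q_0,x_0)$ and hence a generated trajectory, supplies exactly the detail the paper leaves implicit.
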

\begin{proof}
Let $T=(Q,A,B,\rightarrow,O, H)$ be a finite, non-blocking alternating transition system, $\phi$ an LTL$_{-X}(\mathbb{P})$ formula and $\prod:Q\rightarrow 2^{\mathbb{P}}$ a valuation function.
Suppose that $\phi$ is total and there exists a state $q$ of $T$ and a control strategy $f:Q^{+}\rightarrow A$ such that $\sigma\models\phi$ for all $\sigma\in Out(q,f)$.
Then, by Theorem~\ref{Th:plan satisfy} and Definition~\ref{def:graph} and~\ref{def:plan}, there exists a reactive plan $RP(T_{fin})$ of $T$ such that all trajectories generated by this reactive plan satisfy $\phi$.
Therefore, by Corollary~\ref{co:algorithm}, the control strategy algorithm can find an initial state $q'$ and a control strategy $f_{RP}:Q^{+}\rightarrow A$ so that $\sigma\models\phi$ for all $\sigma\in Out(q',f_{RP})$.
\qquad
\end{proof}
\section{Conclusion and future work}\label{Sec:discussion}
Pola and Tabuada have introduced finite abstractions for control systems $\Sigma$ with disturbance inputs~\cite{pola:5,pola:2}.
However, since these finite abstractions are modeled by finite, non-blocking alternating transition systems rather than usual transition systems, the approaches provided in~\cite{fain:2}\cite{tab:1}\cite{tab:2} are not suitable for finding control strategies for Pola and Tabuada's abstractions.
To overcome this defect, this paper presents a control strategy algorithm based on Kabanza et al.'s planning algorithm (see Algorithm~\ref{alg}).
This control strategy algorithm can be used to find an initial state and a control strategy of finite, non-blocking alternating transition system enforcing an given LTL$_{-X}$ formula.
The correctness and completeness of this algorithm are explored.
We demonstrate that this algorithm is correct (see Theorem~\ref{th:plan to strategy1}) and is complete w.r.t~total  LTL$_{-X}$ formulas (see Theorem~\ref{th:strategy exists}).
But it is still an open problem: whether Theorem~\ref{th:strategy exists} holds for all LTL$_{-X}$ formulas.
We will explore this problem in further work.

Now, we may adopt the control strategy algorithm to find an initial state and a control strategy of Pola and Tabuada's finite abstraction enforcing an LTL$_{-X}$ formula $\phi$.
However, the control problem in the design of control system~is:
\begin{problem}\label{problem2}
Given a control system $\Sigma$ with disturbance inputs and an LTL$_{-X}$ formula $\varphi$ as specification,
how to construct a feedback controller
such that all trajectories of $\Sigma$ with this controller satisfy $\varphi$
even in the presence of disturbance inputs?
\end{problem}

Thus a natural question arises at this point: if an initial state and a control strategy of finite abstraction enforcing an LTL$_{-X}$ formula $\varphi$ have been found, whether the controller for finite abstraction can be applied to the original systems to meet $\varphi$?
We have dealt with this problem in~\cite{jinjin}.
\appendix
\renewcommand\thesection{\appendixname~\Alph{section}}
\renewcommand\theequation{\Alph{section}.\arabic{equation}}
\renewcommand\thesection{A}
\section*{Appendix A}\label{sec:appendix A}
\newtheorem{atheorem}{Theorem}[section]
\newtheorem{alemma}{Lemma}[section]
\newtheorem{adefinition}{Definition}[section]

In this appendix, we fix a finite, non-blocking alternating transition system $T=(Q,A,B,{\longrightarrow,} O,H)$, an initial state $q_{0}\in Q$, a total LTL$_{-X}(\mathbb{P})$ formula $\phi$, $\mathcal{A}_{\phi}=(S,\{x_{0}\},2^{\mathbb{P}},\rightarrow_{\mathcal{A}_{\phi}},F)$, a valuation function $\prod:Q\rightarrow 2^{\mathbb{P}}$, a control strategy $f_{0}:Q^{+}\rightarrow A$ such that $\sigma\models\phi$  for all $\sigma\in Out(q_{0},f_{0})$.
Suppose that ${\mathcal{A}_{T,q_0}^{\phi}=(S_{T},S_T^{0},A, B,\rightarrow, F_{T})}$ is the product automaton of  the pair $(T,q_0)$ and $\mathcal{A}_{\phi}$ (see Definition~\ref{Def:product automaton}), and the control strategy $f_{T}:(S_{T})^{+}\rightarrow A$ is defined as $f_{T}\triangleq f_{0}\circ \Upsilon_{T}$.
Before proving Lemma~\ref{Lem:Graph state}, we provide two auxiliary results.

 \begin{alemma}\label{Lem:the relation between sigma and sigmaT}
 (1) For any $\sigma\in Out(q_{0},f_{0})$, there exists  a unique $\sigma_{T}\in Out((q_{0},x_{0}),f_{T})$ such that $\Upsilon_{T}(\sigma_{T})=\sigma$.

 (2) For any $s\in Out^{+}(q_{0},f_{0})$, there exists a unique $s_{T}\in Out^{+}((q_{0},x_{0}),f_{T})$ such that $\Upsilon_{T}(s_{T})=s$.

 (3) For any $\alpha_{T}\in Out^{\infty}((q_{0},x_{0}),f_{T})$, if $ReN(\alpha_{T})=n$ then  for any $k<n$, $ReN(\alpha_{T}[1,k])=\infty$.
 \end{alemma}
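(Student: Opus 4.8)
The plan is to treat the three parts separately, with (1) and (2) both resting on the totality of $\mathcal{A}_\phi$ (which supplies, for each state and input letter, a successor that is simultaneously guaranteed to exist and uniquely determined), and (3) being a direct reading of the definition of $ReN$.

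For part (1) I would fix $\sigma\in Out(q_0,f_0)$ and build $\sigma_T$ coordinatewise. The first coordinate is already forced by the requirement $\Upsilon_T(\sigma_T)=\sigma$, so necessarily $\sigma_T[i]=(\sigma[i],x_i)$ for states $x_i\in S$ still to be chosen. I would fix the $x_i$ by following the run of $\mathcal{A}_\phi$ on the input word $\prod(\sigma)$: put $x_1=x_0$ and let $x_{i+1}$ be the unique state with $x_i\xrightarrow{\prod(\sigma[i])}_{\mathcal{A}_\phi}x_{i+1}$, which exists and is unique precisely because $\mathcal{A}_\phi$ is total. To verify $\sigma_T\in Out((q_0,x_0),f_T)$, I would note $\sigma_T[1]=(q_0,x_0)$, and that for each $i$ the hypothesis $\sigma\in Out(q_0,f_0)$ supplies some $b_i\in B$ with $\sigma[i]\xrightarrow{f_0(\sigma[1,i]),b_i}\sigma[i+1]$, while the construction gives $x_i\xrightarrow{\prod(\sigma[i])}_{\mathcal{A}_\phi}x_{i+1}$; since $f_T(\sigma_T[1,i])=f_0(\Upsilon_T(\sigma_T[1,i]))=f_0(\sigma[1,i])$, Definition~\ref{Def:product automaton} assembles these into $\sigma_T[i]\xrightarrow{f_T(\sigma_T[1,i]),b_i}\sigma_T[i+1]$, as required.

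Uniqueness I would obtain from determinism: any competing $\sigma_T'\in Out((q_0,x_0),f_T)$ projecting to $\sigma$ must have the form $\sigma_T'[i]=(\sigma[i],x_i')$ with $x_1'=x_0$, and its membership in $Out$ forces $x_i'\xrightarrow{\prod(\sigma[i])}_{\mathcal{A}_\phi}x_{i+1}'$, so an easy induction using the determinism of $\mathcal{A}_\phi$ gives $x_i'=x_i$ throughout, whence $\sigma_T'=\sigma_T$. Part (2) is the finite analogue and I would prove it the same way, running the coordinatewise construction up to length $|s|$; alternatively, since $T$ is non-blocking one can extend $s$ to some $\sigma\in Out(q_0,f_0)$, apply part (1), and read off $s_T=\sigma_T[1,|s|]$.

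Part (3) I expect to be purely definitional. Writing $M=\{m:\exists i<m,\ \alpha_T[i]=\alpha_T[m]\in F_T\}$, the hypothesis $ReN(\alpha_T)=n$ reads $n=\inf M$, so $M$ contains no index strictly below $n$. For $k<n$ the index set governing $ReN(\alpha_T[1,k])$ is exactly $M\cap\{1,\dots,k\}$ (using $\alpha_T[1,k][j]=\alpha_T[j]$ for $j\le k$), which is empty; hence $ReN(\alpha_T[1,k])=\inf\emptyset=\infty$. The only step requiring genuine attention is in parts (1) and (2), where the recursive choice of the $S$-coordinate must be pinned precisely to the deterministic run of $\mathcal{A}_\phi$ and totality invoked on both sides --- completeness for the existence of each successor and determinism for uniqueness; part (3) carries nothing beyond unwinding $\inf\emptyset=\infty$.
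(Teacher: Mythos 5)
Your proof is correct and follows essentially the same construction as the paper: pair $\sigma$ coordinatewise with the run of $\mathcal{A}_\phi$ on the input word $\prod(\sigma)$, use $f_T(\sigma_T[1,i])=f_0(\sigma[1,i])$ to verify membership in $Out((q_0,x_0),f_T)$, and get uniqueness from the determinism of the total automaton. The one real difference is in how you obtain \emph{existence} of the $\mathcal{A}_\phi$-run in part (1): the paper first argues $\sigma\models\phi$ (from the standing hypothesis on $f_0$), concludes that $\prod(\sigma)$ is accepted by $\mathcal{A}_\phi$, and extracts the run from acceptance, whereas you extract it directly from totality (every state has exactly one successor on every input letter), never invoking $\sigma\models\phi$. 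Your route is slightly cleaner and strictly more general --- it shows parts (1) and (2) hold for an arbitrary control strategy $f_0$, not just one enforcing $\phi$ --- while the paper's route costs nothing extra in its fixed appendix setting. Similarly, for part (2) the paper extends $s$ to an infinite outcome using non-blockingness and applies part (1); you note this as an alternative but your primary direct truncated construction avoids the extension step altogether. Part (3) matches the paper, which dismisses it as immediate from the definition of $ReN$; your unwinding via $M\cap\{1,\dots,k\}=\emptyset$ and $\inf\emptyset=\infty$ is exactly the intended content.
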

\begin{proof}
(1) Let $\sigma\in Out(q_{0},f_{0})$. Then $\sigma\models\phi$. It follows from Definition~\ref{def:traj satis} that $\prod(\sigma)\models\phi$.
Then $\prod(\sigma)$ is accepted by $\mathcal{A}_{\phi}$.
Thus by Definition~\ref{Def:buchi} and~\ref{Def:buchi accept},  there exists a run $x_{1}x_{2}\cdots\in S^{\omega}$ accepted by $\mathcal{A}_{\phi}$ such that
\begin{equation}\label{Eq:Lem relation of sigma and sigmaT}
x_{1}=x_{0}\textrm{ and }x_{i}\xrightarrow{\prod(\sigma[i])}_{\mathcal{A}_{\phi}} x_{i+1}\ \textrm{for all } i\in\mathbb{N}.
\end{equation}
Moreover, it follows from $\sigma\in Out_{T}(q_{0},f_{0})$ that for any $i\in\mathbb{N}$, there exists $b_{i}\in B$ such that $\sigma[i]\xrightarrow{f_{0}(\sigma[1,i]),b_{i}} \sigma[i+1]$.
This together with (\ref{Eq:Lem relation of sigma and sigmaT}) and Definition~\ref{Def:product automaton} implies that for any $i\in\mathbb{N}$,
\begin{equation}\label{Eq:Lem relation of sigma and sigmaT 2}
(\sigma[i],x_{i})\xrightarrow{f_0(\sigma[1,i]),b_{i}} (\sigma[i+1],x_{i+1}).
\end{equation}

We set $\sigma_{T}=(\sigma[1],x_{1})(\sigma[2],x_{2})\cdots$. Clearly, $\Upsilon_{T}(\sigma_{T})=\sigma$ and $\sigma_{T}[1]=(q_{0},x_{0})$. Furthermore, since $f_{T}=f_0\circ \Upsilon_{T}$, we get $f_{T}(\sigma_{T}[1,i])=f_0(\sigma[1,i])$ for all $i\in\mathbb{N}$.
Thus it follows from (\ref{Eq:Lem relation of sigma and sigmaT 2}) that for any $i\in\mathbb{N}$,
$(\sigma[i],x_{i})\xrightarrow{f_{T}(\sigma_{T}[1,i]),b_{i}} (\sigma[i+1],x_{i+1})$.
Therefore, we obtain $\sigma_{T}\in Out((q_{0},x_{0}),f_{T})$.

To show the uniqueness of such $\sigma_{T}$, let $\sigma_{T} '\in Out((q_{0},x_{0}),f_{T})$ and $\Upsilon_{T}(\sigma_{T}')=\sigma$.
Then since $\mathcal{A}_{\phi}$ is total, there exists a unique run $x_{1}x_{2}\cdots$ such that $x_{1}=x_{0}$ and $x_{i}\xrightarrow{\prod(\sigma[i])}_{\mathcal{A}_{\phi}} x_{i+1}$ for all $i\in\mathbb{N}$.
So by Definition~\ref{Def:product automaton}, it is easy to check that $\Upsilon_{A}(\sigma_{T} ')=\Upsilon_{A}(\sigma_{T})$. Then it follows from $\Upsilon_{T}(\sigma_{T}')=\sigma=\Upsilon_{T}(\sigma_{T})$ that $\sigma_{T}'=\sigma_{T}$.

(2) Let $s\in Out^{+}(q_{0},f_{0})$. Then by the definition of $Out^{+}(q_{0},f_{0})$ and $Out(q_{0},f_{0})$, $s$~is a prefix of $\sigma$ for some $\sigma\in Out(q_{0},f_{0})$.
 So by (1), there exists $\sigma_{T}\in Out((q_{0},x_{0}),f_{T})$ such that $\Upsilon_{T}(\sigma_{T})=\sigma$ and $\sigma_{T}$ is accepted by $\mathcal{A}_{T,q_0}^{\phi}$. Thus we have $\Upsilon_{T}(\sigma_{T}[1,|s|])=s$ and $\sigma_{T}[1,|s|]\in Out^{+}((q_{0},x_{0}),f_{T})$.   Similar to (1), we may show that $\sigma_{T}[1,|s|]$ is a unique sequence satisfying the condition.

(3) Follows  from Definition~\ref{Def:number of first cycle}.
 \qquad
\end{proof}

\begin{alemma}\label{Lem:sigma finite length}
There exists $n\in\mathbb{N}$ such that for all $\sigma_{T}\in Out((q_{0},x_{0}),f_{T})$, we have $ReN(\sigma_{T})\leq n$.
\end{alemma}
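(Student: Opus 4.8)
The plan is to argue by contradiction and to reduce the existence of a uniform bound to a finiteness statement about the set $S_f$ of outcome prefixes containing no repeated accepting state, which is exactly the set appearing in Definition~\ref{def:graph}. Observe first that, by Lemma~\ref{Lem:the relation between sigma and sigmaT}(3), a uniform bound on $ReN$ over all outcomes is equivalent to $S_f$ being finite: for any $\sigma_T \in Out((q_0,x_0),f_T)$ with $ReN(\sigma_T)=m$, the prefix $\sigma_T[1,m-1]$ has $ReN=\infty$ (hence lies in $S_f$) and has length $m-1$, so a bound on the lengths of elements of $S_f$ bounds $m$, and conversely. Since $T$ is non-blocking there is at least one outcome, and by Lemma~\ref{Lem:control strategy for AT}(2) every outcome is accepted by $\mathcal{A}_{T,q_0}^{\phi}$, so some accepting state must recur and $ReN(\sigma_T)<\infty$ for each outcome. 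Thus the whole content of the lemma is the finiteness of $S_f$.

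To prove $S_f$ finite, I would suppose it is infinite and view it as a tree ordered by the prefix relation, rooted at the length-one sequence $(q_0,x_0)$. It is a tree because $S_f$ is closed under taking nonempty prefixes: if $ReN(s_T)=\infty$ then $ReN(s_T[1,k])=\infty$ for every $k$ by Lemma~\ref{Lem:the relation between sigma and sigmaT}(3), and a prefix of an outcome prefix is again an outcome prefix. It is finitely branching because every child of $s_T$ has the form $s_T\circ(q',x')$ with $(q',x')\in S_T$, and $S_T=Q\times S$ is finite. An infinite, finitely branching tree has an infinite branch by K\"onig's lemma; its union is an infinite sequence $\sigma_T^{\ast}$ all of whose finite prefixes lie in $S_f\subseteq Out^{+}((q_0,x_0),f_T)$, which forces $\sigma_T^{\ast}\in Out((q_0,x_0),f_T)$.

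The contradiction then comes from reconciling a finitary property of $\sigma_T^{\ast}$ with an infinitary one. Since every finite prefix of $\sigma_T^{\ast}$ has $ReN=\infty$, no accepting state in $F_T$ is ever repeated along $\sigma_T^{\ast}$; as $F_T$ is finite, only finitely many accepting-state occurrences appear in $\sigma_T^{\ast}$, so no element of $F_T$ occurs infinitely often. By Definition~\ref{Def:product automaton}(3), $\sigma_T^{\ast}$ is therefore not accepted by $\mathcal{A}_{T,q_0}^{\phi}$, contradicting Lemma~\ref{Lem:control strategy for AT}(2). Hence $S_f$ is finite, and taking $n=1+\max\{|s_T|:s_T\in S_f\}$ yields $ReN(\sigma_T)\le n$ for every $\sigma_T\in Out((q_0,x_0),f_T)$ via the reduction above.

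I expect the main obstacle to be the compactness step: carefully checking that $S_f$ really forms a finitely branching tree and that K\"onig's lemma produces a genuine infinite outcome, and then correctly converting the pairwise ``no repetition'' condition into the ``not infinitely often'' failure of the B\"uchi acceptance condition, a step that relies essentially on the finiteness of $F_T$ and, upstream, on the totality of $\mathcal{A}_{\phi}$ (which is what guarantees, through Lemma~\ref{Lem:control strategy for AT}(2), that every outcome is accepted).
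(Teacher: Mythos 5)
Your proof is correct, and its core coincides with the paper's: both extract, by a compactness argument, an infinite outcome along which no accepting state of $F_T$ ever repeats (so $ReN=\infty$ on it), and then contradict the fact from Lemma~\ref{Lem:control strategy for AT}(2) that every element of $Out((q_{0},x_{0}),f_{T})$ is an accepted run, using the finiteness of $F_T$ to convert ``no repetition'' into failure of the B\"uchi acceptance condition. The difference lies in how the limit outcome is produced and how the argument is decomposed. The paper assumes the bound fails, takes witnesses $\sigma^{n}_{T}$ with $ReN(\sigma^{n}_{T})>n$, and builds the limit by an explicit pigeonhole induction (its ``Claim''), which is a hand-rolled instance of K\"onig's lemma; the limit then satisfies $ReN(\sigma_{T})>k$ for every $k$ because it agrees on arbitrarily long prefixes with witnesses of large $ReN$. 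You instead reduce the lemma to finiteness of the set $S_{f}$ of Definition~\ref{def:graph} and apply K\"onig's lemma to the prefix tree of $S_{f}$ --- this inverts the paper's logical order, since the paper proves the present lemma first and then uses the resulting bound $n$ to show $S_{f}\subseteq\bigcup_{i<n}Out^{i}((q_{0},x_{0}),f_{T})$ in part (1) of Lemma~\ref{Lem:Graph state}, whereas you obtain finiteness of $S_{f}$ directly and recover the bound as $1+\max\{|s_{T}|:s_{T}\in S_{f}\}$. Your packaging buys a cleaner compactness step (an explicit finitely branching tree, closed under prefixes by Lemma~\ref{Lem:the relation between sigma and sigmaT}(3)) and yields the finiteness half of Lemma~\ref{Lem:Graph state}(1) as a by-product; the paper's version avoids introducing the tree and stays self-contained at the level of sequences. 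The details you flag all check out: prefixes of outcomes are outcomes, so the union of an infinite branch is a genuine element of $Out((q_{0},x_{0}),f_{T})$; acceptance of every outcome gives $ReN(\sigma_{T})<\infty$, so the reduction to bounding lengths in $S_{f}$ is sound (noting $ReN(\sigma_{T})=m$ forces $m\geq 2$, so $\sigma_{T}[1,m-1]$ is a nonempty element of $S_{f}$).
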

\begin{proof}
Suppose that for any $n\in\mathbb{N}$, there exists $\sigma^{n}_{T}\in Out((q_{0},x_{0}),f_{T})$ such that $ReN(\sigma^{n}_{T})> n$. We will give a contradiction. To this end, the following claim is provided first.

 \textbf{Claim}.
We may construct an infinite sequence $\sigma_{T}\in (S_{T})^{\omega}$ satisfying that for any $k\in\mathbb{N}$, there exist $k_{i}\in\mathbb{N} (i\in\mathbb{N})$ with $k_{1}<k_{2}<k_{3}<\cdots$ such that $\sigma^{k_{i}}_{T}[1,k]=\sigma_{T}[1,k]$ for any $i\in\mathbb{N}$.

We construct such  a sequence by induction on $k$. Let $k=1$. We set $\sigma_{T}[1]=(q_{0},x_{0})$ and $k_{i}=i$ for each $i\in\mathbb{N}$. Then for any $i\in\mathbb{N}$, $\sigma^{k_{i}}_{T}[1]=\sigma^{i}_{T}[1]=(q_{0},x_{0})=\sigma_{T}[1]$ follows from $\sigma^{i}_{T}\in Out((q_{0},x_{0}),f_{T})$.

Suppose that $k=m+1$ and we have found $\sigma_T[1,m]$ and $m_{i}\in\mathbb{N} (i\in\mathbb{N})$ with  $m_{1}<m_{2}<m_{3}<\cdots$ such that $\sigma^{m_{i}}_{T}[1,m]=\sigma_{T}[1,m]$ for all $i\in\mathbb{N}$. Since $S_{T}$ is finite, the set $\{\sigma^{m_{i}}_{T}[m+1]:i\in\mathbb{N}\}$ is finite. So there exists $(q_{k},x_{k})\in \{\sigma^{m_{i}}_{T}[m+1]:i\in\mathbb{N}\}$ and $k_{i}\in \{m_{1}, m_{2}, \cdots \} (i\in\mathbb{N})$ with $k_{1}<k_{2}<k_{3}<\cdots$ such that
 $\sigma^{k_{i}}_{T}[m+1]=(q_{k},x_{k})$ for all $i\in\mathbb{N}$.
We set $\sigma_{T}[k]=(q_{k},x_{k})$.
Thus it follows that $\sigma^{k_{i}}_{T}[1,k]=\sigma_{T}[1,m]\circ(q_{k},x_{k})=\sigma_{T}[1,k]$ for all $i\in\mathbb{N}$.
\\

Now, we return to the proof of this lemma. It is easy to check that $\sigma_{T}\in Out((q_{0},x_{0}),f_{T})$.
Then by Lemma~\ref{Lem:control strategy for AT}, $\sigma_{T}$ is accepted by $\mathcal{A}_{T,q_0}^{\phi}$.
To obtain a contradiction, we will show that $\sigma_{T}$ is not accepted by $\mathcal{A}_{T,q_0}^{\phi}$ below.

Let $k\in\mathbb{N}$. Since $k_{1}<k_{2}<\cdots$, there exists $i_{k}\in \{k_{1}, k_{2}, \cdots \}$ such that $i_{k}>k$.
So by the above claim and the supposition at the beginning of the proof, we obtain
$\sigma^{i_{k}}_{T}[1,k]=\sigma_{T}[1,k]$  and $ReN(\sigma^{i_{k}}_{T})>i_{k} >k$.
Further, by Definition~\ref{Def:number of first cycle}, we have $ReN(\sigma_{T})>i_{k} >k$. Then, since $k$ is an arbitrary nature number, we get $ReN(\sigma_{T})=\infty$. Since the accepting state set $F_{T}$ is finite, it follows from Definition~\ref{Def:number of first cycle} and $ReN(\sigma_{T})=\infty$ that there does not exist $(q,x)\in F_{T}$ such that $(q,x)$ appears infinitely often in $\sigma_{T}$. So $\sigma_{T}$ is not accepted by $\mathcal{A}_{T,q_0}^{\phi}$. \qquad
\end{proof}

\textit{
Lemma \ref{Lem:Graph state}.
Let $T=(Q,A,B,{\longrightarrow,} O,H)$ be a finite, non-blocking alternating transition system, $q_{0}\in Q$,
$\phi$ a total LTL$_{-X}(\mathbb{P})$ formula, $\mathcal{A}_{\phi}=(S,\{x_{0}\},2^{\mathbb{P}},\rightarrow_{\mathcal{A}_{\phi}},F)$ and let  $\prod:Q\rightarrow 2^{\mathbb{P}}$ be a valuation function.
Suppose that ${\mathcal{A}_{T,q_0}^{\phi}=(S_{T},S_{T}^{0},A, B,\rightarrow, F_{T})}$ is the product automaton of the pair $(T,q_0)$ and $\mathcal{A}_{\phi}$ and $f_{0}$ is a control strategy of $T$ so that $\sigma\models\phi$ for all $\sigma\in Out(q_0,f_0)$. Let $f_{T}= f_{0}\circ \Upsilon_{T}$ and let $T_{fin}(\mathcal{A}_{T,q_0}^{\phi},f_{T})=<S_{f},\\ A, \rightarrow_{f}, lab>$ be the accepting transition system w.r.t. $\mathcal{A}_{T,q_0}^{\phi}$ and $f_{T}$. Then the following conclusions hold:}

\textit{(1) The set $S_{f}$ is finite and non-empty.}

\textit{(2) The trajectory $\sigma_{T}$ of $T_{fin}(\mathcal{A}_{T,q_0}^{\phi},f_{T})$ is a run accepted by $\mathcal{A}_{T,q_0}^{\phi}$.}

\textit{(3) For any $s_{T}\in S_{f}$ and for any state $q\in Q$ of $T$, if $\Upsilon_{T}(s_{T}[end])\xrightarrow{f_{T}(s_{T}),b}q$ for some $b\in B$, then there exists  $s_{T}'\in S_{f}$ such that $s_{T}\xrightarrow{f_{T}(s_{T})}_{f}s_{T}'$ and ${\Upsilon_{T}(s_{T}')[end]=q}$.}

\begin{proof}
(1) Clearly, $(q_0,x_0)\in S_f$ and then $S_f$ is non-empty. Next, we show that $S_f$ is finite.
By Lemma~\ref{Lem:sigma finite length}, there exists $n\in\mathbb{N}$ such that $ReN(\sigma_{T})\leq n$ for any $\sigma_{T}\in Out((q_{0},x_{0}),f_{T})$.
Since $S_{T}=Q\times S$ is finite, $Out^{i}((q_{0},x_{0}),f_{T})$ is finite for any $i\in\mathbb{N}$ and then $\bigcup_{i< n}Out^{i}((q_{0},x_{0}),f_{T})$ is finite. So to complete the proof, we just need to show that $S_{f}\subseteq \bigcup_{i< n}Out^{i}((q_{0},x_{0}),f_{T})$.

Let $s_{T}\in S_{f}$.
Then by Definition~\ref{def:graph}, we have $ReN(s_{T})=\infty$.
On the other side, by Lemma~\ref{Lem:control strategy for AT}, we obtain $\Upsilon_{T}(s_T)\in Out^{+}(q_0,f_0)$.
Then, since $T$ is non-blocking, by Definition~\ref{def:control strategy}, there exists $\sigma\in Out(q_0,f_0)$ such that $\Upsilon_{T}(s_T)$ is a prefix of $\sigma$.
Thus by Lemma~\ref{Lem:the relation between sigma and sigmaT}, there exists $\sigma_{T}\in Out((q_{0},x_{0}),f_{T})$ such that $s_{T}$ is a prefix of $\sigma_{T}$.
Further, since $ReN(\sigma_{T})\leq n$ and $ReN(s_{T})=\infty$, by Definition~\ref{Def:number of first cycle}, we get
$|s_{T}|<ReN(\sigma_{T})\leq n$.

(2) Let $\sigma_{T}$ be a trajectory of $T_{fin}(\mathcal{A}_{T,q_0}^{\phi},f_{T})$. Then by (2) in Lemma~\ref{Lem:control strategy for AT}, it is enough to show that $\sigma_{T}\in Out((q_{0},x_{0}),f_{T})$.
 By Definition~\ref{def:graph}, there exists a sequence $s_{T}^{1}s_{T}^{2}\cdots$ over $S_{f}$ such that
 \begin{center}
 $s_{T}^{1}=(q_{0},x_{0})$ and for any $i\in\mathbb{N}$, $s_{T}^{i}[end]=\sigma_{T}[i]$ and $s_{T}^{i}\xrightarrow{a_{i}}_{f} s_{T}^{i+1}$.
 \end{center}
 Thus it follows from Definition~\ref{def:graph} that  $\sigma_{T}[1]=(q_{0},x_{0})$ and for any $i\in\mathbb{N}$, there exists $(q,x)\in S_{T}$ and $b\in B$ such that
 $a_{i}= f_{T}(s_{T}^{i})$,  $\sigma_{T}[i]\xrightarrow{a_{i},b}(q,x)$ and $s_{T}^{i+1}[end]=\sigma_{T}[i+1]=(q,x)$.
Then it follows that $\sigma_{T}\in Out((q_{0},x_{0}),f_{T})$.

(3) Let $s_{T}\in S_{f}$, $q\in Q$, $ReN(s_{T})=\infty$ and $\Upsilon_{T}(s_{T}[end])\xrightarrow{f_{T}(s_{T}),b}q$ for some $b\in B$.
For convenience, we put $s=\Upsilon_{T}(s_{T})$.
By (1) in Lemma~\ref{Lem:control strategy for AT}, we have $s\in Out^{+}(q_{0},f_{0})$.
Then it follows from $s[end]\xrightarrow{f_{T}(s_{T}),b}q$ and $f_{T}(s_{T})= f_{0}(s)$ that
$sq\in Out^{+}(q_{0},f_{0})$.
So by (2) in Lemma~\ref{Lem:the relation between sigma and sigmaT}, there exists a unique $s_{T}'\in Out^{+}((q_{0},x_{0}),f_{T})$ such that
$\Upsilon_{T}(s_{T}')=sq$.
Similarly, $s_{T}$ is a unique sequence in $Out^{+}((q_{0},x_{0}),f_{T})$ such that $\Upsilon_{T}(s_{T})=s$.
Thus $s_{T}\circ(q,x)=s_{T}'$ for some state $x\in S$ of $\mathcal{A}_{\phi}$.
If $ReN(s_{T}')=\infty$ then by Definition~\ref{def:graph}, we obtain $s_{T}'\in S_{f}$, $s_{T}\xrightarrow{f_T(s_T)}_{f}s_{T}'$ and $\Upsilon_{T}(s_{T}'[end])=q$.
Suppose that $ReN(s_{T}')<\infty$. Then since $ReN(s_{T})=\infty$ and $s_{T}\circ(q,x)=s_{T}'$, by Definition~\ref{Def:number of first cycle}, there exists $s_{T}''\prec s_{T}'$ such that $s_{T}''[end]=s_{T}'[end]$ and $ReN(s_{T}'')=\infty$.
Further, by Definition~\ref{def:graph}, we have $s_{T}''\in S_{f}$, $s_{T}\xrightarrow{f_T(s_T)}_{f}s_{T}''$ and $\Upsilon_{T}(s_{T}''[end])=q$.
\end{proof}

% ---- Bibliography ----
%
 \clearpage
\end{document}